\definecolor{myurlcolor}{rgb}{0,0,0.4}
\definecolor{mycitecolor}{rgb}{0,0.5,0}
\definecolor{myrefcolor}{rgb}{0.5,0,0}
\theoremstyle{plain}
\newtheorem{theorem}{Theorem}[section]
\newtheorem{lemma}[theorem]{Lemma}
\newtheorem{proposition}[theorem]{Proposition}
\theoremstyle{remark}
\newtheorem{remark}[theorem]{Remark}
\theoremstyle{definition}
\newtheorem{definition}[theorem]{Definition}
\newtheorem{example}[theorem]{Example}
\def\C{\mathbb{C}}
\def\R{\mathbb{R}}
\def\N{\mathbb{N}}
\newcommand{\op}[1]{\operatorname{#1}}
\newcommand{\mcal}[1]{\mathcal{#1}}
\newcommand{\her}{\operatorname{Her}}
\newcommand{\frdual}[1]{#1^{\vee_{\op{fr}}}}
\newcommand{\tr}{\op{tr}}
\let\originalleft\left 
\let\originalright\right 
\renewcommand{\left}{\mathopen{}\mathclose\bgroup\originalleft} 
\renewcommand{\right}{\aftergroup\egroup\originalright} 
\title{Abstract Operator Systems over the Cone of Positive Semidefinite Matrices}
\author{Martin Berger}
\address{Martin Berger, Universit\"at Innsbruck, Austria }
\email{martin.berger@uibk.ac.at}
\author{Tim Netzer}
\address{Tim Netzer, Universit\"at Innsbruck, Austria}
\email{tim.netzer@uibk.ac.at}
\pgfplotsset{compat=1.12}
\begin{document}

\begin{abstract} 
There are several important abstract operator systems with the convex  cone of positive semidefinite matrices at the first level. Well-known are the operator systems of separable matrices, of positive semidefinite matrices, and of block positive matrices. In terms of maps, these are the operator systems of entanglement breaking, completely positive, and positive linear maps, respectively. But there exist other interesting and less well-studied such operator systems, for example  those of completely copositive maps, doubly completely positive maps, and decomposable maps, which all play an important role in quantum information theory. We investigate which of these systems is finitely generated, and which admits a finite-dimensional realization in the sense of the Choi-Effros Theorem. We answer this question for all of the described systems completely. Our main contribution is  that decomposable maps form a system which does not admit a finite-dimensional realization, though being finitely generated, whereas the system of doubly completely positive maps is not finitely generated, though having a finite-dimensional realization. This also implies that there cannot exist a finitary Choi-type characterization of doubly completely positive maps.\end{abstract}

\maketitle

\section{Introduction}
Certain types of linear maps between matrix algebras are of great importance in operator algebra and  quantum information theory \cite{pau, wo}. First and foremost, these are {\it completely positive maps},   also known as {\it quantum channels} when they preserve trace. They model a communication channel through which quantum information can be transmitted. But other types of maps are also of interest, for example {\it entanglement breaking} or {\it decomposable maps}. 

Any linear map between two matrix algebras is uniquely determined by its {\it Choi matrix}, a matrix that lives in the tensor product of the two matrix algebras. All different positivity properties of the maps have straightforward translations to notions of positivity of the Choi matrices. For example, completely positive maps correspond to positive semidefinite (psd) matrices, and entanglement breaking maps correspond to separable matrices.

This yields a connection to free (=non-commutative) semialgebraic geometry, were sets of positive matrices are also studied  \cite{hkm, neimn}. The crucial idea  here is to consider sets of matrices {\it of all sizes simultaneously}. This approach often reveals a structure that is invisible when looking at matrices of a fixed size alone, Helton's free version of Hilbert's 17th problem \cite{hepos} being maybe the most striking example. This suggests to look at the above positive maps/matrices also size independently. This can be done in the (almost) equivalent settings of {\it free spectrahedral cones} or {\it abstract operator systems}. 
While free spectrahedral cones play an important role in free semialgebraic geometry,  {\it abstract operator system} do so in operator algebra. It turns out that free spectrahedral cones are precisely the abstract operator systems that admit a finite-dimensional realization in the sense of the Choi-Effros Theorem.

To conclude, quantum information theory, operator algebra and free semialgebraic geometry often study the same objects, just from different perspectives (see also \cite{dlcn}).

In this paper we continue the study of operator systems from the viewpoint of free semialgebraic geometry, and vice versa. We consider abstract operator systems over the convex cone of positive semidefinite matrices, and study some of their properties. In particular, we ask which of these systems admit a finite-dimensional realization, and which are finitely generated. For systems over polyhedral cones, these questions have been answered in \cite{fritz}.  So passing to cones of positive matrices is a natural next step, and also well-motivated  by the above described applications in different areas.

Our contribution in this paper is the following. We start by considering the operator systems of separable matrices, positive semidefinite matrices, matrices with positive semidefinite partial transpose, and block positive matrices. In terms of maps these correspond to  entanglement breaking, completely positive, completely copositive, and positive maps. We combine several results from different areas to obtain the following: Separable and block positive matrices form operator systems that are neither finitely generated nor have a finite-dimensional realization. Positive semidefinite  matrices, and  matrices with positive semidefinite partial transpose, form operator systems that both have a finite-dimensional realization and are finitely generated. {\it Doubly positive matrices} arise as the intersection of positive semidefinite  matrices with matrices of positive semidefinite partial transpose, and they thus form an operator system with a finite-dimensional realization. {\it Decomposable matrice} arise as the sum/convex hull of positive semidefinite  matrices with matrices of positive semidefinite partial transpose, and they thus form an operator system that is finitely generated. Our main result is that the system of doubly positive matrices is not finitely generated, and the system of decomposable matrices does not admit a finite-dimensional realization. The former can also be stated as the impossibility as a finitary Choi-type characterization of double positive matrices/doubly completely positive linear maps.

Our proof uses techniques from semialgebraic geometry. Namely, we show that already the finite level sets of the operator system of decomposable matrices cannot be defined by  linear matrix inequalities. Thus the whole operator system of decomposable maps cannot be a free spectrahedron, i.e.\ cannot have a finite-dimensional realization. Duality then yields the result for doubly positive matrices. 
We do so by explicitly computing the intersection of decomposable matrices with a two-dimensional subspace, and show that the arising set fails to have two necessary properties for having a linear matrix inequality representation.
Note that although this sounds straightforward, the problem lies in the fact that decomposability of a matrix is not easy to determine. Even if the matrix is of a special form, i.e.\ lies in a certain subspace,  the two matrices that might provide a decomposition will in general not be of the same form again. Figure \ref{fig:pluspsd} below indeed shows that decomposable matrices in our subspace are much more general than convex combinations of positive semidefinite matrices and matrices with positive semidefinite partical transpose within our subspace.

\section{Preliminaries}
Let us first fix notation and introduce the basic concepts and setup we need to prove our results.
\subsection{Operator Systems and Free Spectrahedra}
Throughout, let $d,s,t\in\N$. We write ${\rm Mat}_{d,s}(\C)$ for the space of complex $d\times s$ matrices and set ${\rm Mat}_d(\C):={\rm Mat}_{d,d}(\C),$ where the identity matrix is denoted by $I_d$. Furthermore let $\mathcal{V}$ denote a $\C$-vector space with involution $*$, and $\mathcal{V}_h$ the $\R$-subspace of its Hermitian elements. For any $s\geq 1$ the vector space ${\rm Mat}_s(\mathcal{V}) = \mathcal{V}\otimes_{\C}{\rm Mat}_s(\C)$ of $s\times s$-matrices with entries from $\mcal{V}$ comes equipped with the canonical involution defined by $\left(v_{ij}\right)_{i,j}^*:=\left(v_{ji}^*\right)_{i,j}$.

\begin{definition}[{e.g.~\cite[Chapter~13]{pau}}]
An {\it abstract operator system} $\mathcal{C}$ on $\mathcal{V}$ consists of a closed and salient convex cone $\mathcal{C}_s\subseteq {\rm Mat}_s(\mathcal{V})_h$ for each $s\geq 1$, and some $u\in \mathcal{C}_1\subseteq \mathcal{V}_h$ such that
\begin{itemize}
\item[($i$)] $A\in \mathcal{C}_s, V\in {\rm Mat}_{s,t}(\mathbb C) \Rightarrow V^*AV\in \mathcal{C}_t$,
\item[($ii$)]  $u\otimes I_s$ is an order unit of $\mathcal{C}_s$ for all $s\geq 1$.
\end{itemize}
\end{definition}

\begin{remark}\label{rem: opsystem}
\hangindent\leftmargini
\textup{($i$)}\hskip\labelsep The topology in which each $\mathcal{C}_s$ is required to be closed is understood to be the finest locally convex topology on $\mathcal V$.
\begin{enumerate}
\setcounter{enumi}{1}
\item[($ii$)] Being an order unit is equivalent to being an interior point, see \cite{cimp}.
\item[($iii$)] $u\otimes I_s$ is an order unit  of $\mathcal{C}_s$ for all $s$ if and only if this holds for $s=1$, see \cite{fritz}.
\item[($iv$)] The condition of $\mcal{C}_s$ being closed is equivalent to $u$ being an \textit{archimedean} order unit, again \cite{cimp,pau}.
\item[($v$)] We refer to $\mcal{C}_s$ as the $s$-th level of $\mcal{C}$ and write $A\in\mcal{C}$ if there exists an $s\geq 1$ such that $A\in \mcal{C}_s$.
\end{enumerate}
\end{remark}
By the Choi--Effros Theorem (\cite{choi}, see  also \cite[Chapter~13]{pau}), for every abstract operator system $\mathcal{C}$  there exists a Hilbert space $\mathcal H$ and a $*$-linear mapping $\varphi\colon \mathcal V\rightarrow \mathbb B(\mathcal H)$ with $\varphi(u)={\rm id}_{\mathcal H}$, such that for all $s\geq 1$ and $A\in \mathcal{C}_s$,
$$
A\in \mathcal{C}_s \:\Leftrightarrow\: (\varphi\otimes{\rm id})(A)\geqslant 0,
$$
where $\geqslant 0$ denotes positive semidefiniteness (psd).
On the right-hand side, we use the canonical identification
$$
{\rm Mat}_s(\mathbb B(\mathcal H))=\mathbb B(\mathcal H)\otimes_\C {\rm Mat}_s(\C) = \mathbb B(\mathcal H^s)
$$
to define positivity of the operator. Such a mapping $\varphi$ is called a {\it concrete realization} or just {\it realization} of the operator system $\mathcal{C}$. A realization $\varphi$ is necessarily injective, since $\mathcal{C}_1$ does not contain a nontrivial subspace.

\begin{definition}
An abstract operator system $\mathcal{C}$ is {\it finite-dimensional realizable} if there is a realization with $\dim{\mathcal{H}}<\infty$. \end{definition}
From now on we will {\it always assume that $\mathcal V$ is finite-dimensional}. After a suitable choice of basis we can then even assume $\mathcal V=\mathbb C^d$ with the canonical involution, and thus  $\mathcal V_h=\R^d$. Then
$$
{\rm Mat}_s(\mathcal V)=\mathcal V\otimes_\C {\rm Mat}_s(\C)={\rm Mat}_s(\C)^d,\quad {\rm Mat}_s(\mathcal V)_h=\her_s(\C)^d,
$$
where $\her_s(\C)$ denotes the Hermitian $s\times s$ matrices over $\C,$ and a realization of $\mathcal{C}$ just consists of self-adjoint operators $T_1,\ldots, T_d\in\mathbb B(\mathcal H)_h$ with $$u_1T_1+\cdots+u_dT_d={\rm id}_{\mathcal H}$$ and 
$$
(A_1,\ldots, A_d)\in \mathcal{C}_s \:\Leftrightarrow\: T_1\otimes A_1+\cdots + T_d\otimes A_d\geqslant 0.
$$
 Finite-dimensional realizability then means thus the $T_i$ can be taken to be Hermitian matrices.

\begin{definition}
A {\it (classical) spectrahedron} \cite{Ramana1995} is a set of the form
$$
\left\{ a\in\R^d \bigm|  a_1B_1+\cdots+a_dB_d\geqslant 0\right\},
$$
where $B_1,\ldots, B_d\in \her_r(\C)$ are Hermitian matrices.  For any $s\geq 1$, we define
$$
\mathcal S_s(B_1,\ldots, B_d) := \left\{ (A_1,\ldots, A_d)\in\her_s(\C)^d \bigm| B_1\otimes A_1+\cdots +B_d\otimes A_d\geqslant 0\right\}.
$$
The family of cones $\mathcal S(B_1,\ldots,B_d)=\left(\mathcal S_s(B_1,\ldots, B_d)\right)_{s\geq 1}$ is called the {\it free spectrahedron} defined by $B_1,\ldots,B_d$.
\end{definition}
\begin{remark}
In order for a free spectrahedron to be an operator system, the positive cones must be salient and have an order unit. The first is equivalent to the $B_i$ being linearly independent, and the latter happens in particular if there is $u\in\R^d$ with $\sum_i u_i B_i =  I_r$, in which case we take such  $u$ to be the order unit.
\end{remark}
Spectrahedra are the feasible sets of semidefinite programming, see \cite{blekherman2013} for an overview. Recall that a semialgebraic set $S\subseteq\R^d$ is called \textit{basic closed} if there exist polynomials $p_1,\ldots,p_r$ such that $S=\{a\in\R^d\,|\, p_1(a)\geq 0,\ldots, p_r(a)\geq 0\}$. One sees easily that for every free spectrahedron each level is a classical spectrahedron, and that every classical spectrahedron is a closed, convex and basic closed semialgebraic set. For example, the principal minors of the linear matrix polynomial $x_1B_1+\ldots+x_n B_n$ define $\mathcal S_1(B_1,\ldots, B_d)$ as a basic closed semialgebraic set in the above situation. 

\subsection{Largest and Smallest Operator Systems, Free Duality}
\label{subsec: cones}
In the following let $C\subseteq\R^d$ denote a closed salient cone with order unit $u,$ and $\mcal{C}=(\mcal{C}_s)_{s\in\N}$ an operator system such that $\mcal{C}_s\subseteq\her_s(\mathbb C)^d$ for every $s\geq 1$. 
We are interested in operator systems $\mcal{C}$ such that $\mcal C_1=C$. It is known that the largest such operator system $C^{\max}$ is given via
$$
C_s^{\max}:=\left\{ (A_1,\ldots, A_d)\in{\rm Her}_s(\mathbb C)^d \bigm| \forall v\in \mathbb C^s: \ (v^*A_1v,\ldots, v^*A_dv)\in C\right\}.
$$
while the smallest such operator system $C^{\min}$ has the form
$$
C_s^{\min}:=\left\{\sum_{i=1}^n c_i\otimes P_i \biggm| n\in\N,c_i\in C, P_i\in{\rm Her}_s(\mathbb C), P_i\geqslant 0\right\},
$$ 
see \cite{pauto,fritz}. The above system is largest in the sense that for any operator system $\left(  \mcal D_s\right)_{s\geq 1}$ with $ \mcal D_1\subseteq C$, we have $ \mcal D_s\subseteq  C_s^{\max}$ for all $s\geq 1,$ and analogously for the smallest. Recall that an inner product on $\her_s(\C)^d$ is given as 
$$
\langle A,B\rangle=\sum_{i=1}^d\tr(B_i A_i)
$$
for $A=(A_1,\ldots,A_d),B=(B_1,\ldots,B_d)\in\her_s(\C)^d,$ where $\tr$ denotes the trace of a matrix. Thus the (classical) dual of a cone $\mcal{C}_s\subseteq\her_s(\C)^d$ is given via
$$
\mcal C_s^\vee=\left\{(A_1,\ldots,A_d)\in\her_s(\C)^d \biggm| \sum_{i=1}^d\tr(B_i A_i)\geq 0\text{ for every } (B_1,\ldots,B_d)\in\mcal C_s\right\}.
$$
In contrast, we define the \textit{free dual} $\frdual{\mcal{C}}$ of an operator system $\mcal{C}$ by
$$
\frdual{\mcal{C}_s}:=\left\{(A_1,\ldots,A_d)\in\her_s(\C)^d\,\biggm|\,\sum_{i=1}^d  B_i^T\otimes A_i\geqslant 0\text{ for every } (B_1,\ldots,B_d)\in\mcal{C}\right\}.
$$
Note that in this last definition $(B_1,\ldots, B_d)$ runs through all of $\mcal C$, i.e.\ through all levels $\mcal C_t$. Also, $B_i^T$ denotes the matrix $B_i$ transposed.

\begin{proposition}\label{prop: freedual}
Let $C\subseteq\R^d$ denote a closed salient cone with order unit, and let $\mcal{C}$ and $\mcal{D}$ be operator systems with $\mcal C_s,\mcal D_s\subseteq\her_s(\C)^d$ for every $s\geq 1$. Then the following holds:
\begin{enumerate}[label=(\roman*)]
\item $\frdual{\mcal{C}}_s\subseteq \mcal{C}_s^\vee$ for every $s$, where equality
 holds if $\mcal{C}=C^{\min}$ or $s=1$.\label{prop:freedual_dual}
 \item $\frdual{\mcal{C}}$ is an operator system.
 \item $\frdual{(\frdual{\mcal{C}})}=\mcal{C}$. \label{prop:freedual_dualdual}
\item $\frdual{(C^{\min})}=(C^\vee)^{\max}$ and $\frdual{(C^{\max})}=(C^\vee)^{\min}$.\label{prop:freedual_minmax}
\item Let $\mcal{C}+\mcal{D}$ and $\mcal{C}\cap\mcal{D}$ denote the (level-wise) Minkowski sum and intersection respectively. Then $$\frdual{(\mcal{C}+\mcal{D})}=\frdual{\mcal{C}}\cap\frdual{\mcal{D}}.$$\label{prop:freedual_sum}
\end{enumerate}
\end{proposition}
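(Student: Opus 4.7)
I would handle the parts in the order (i), (ii), (iv), (v), then (iii), since (iii) is the main obstacle while the rest are essentially computational.

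For (i), testing the defining positivity condition $\sum_i B_i^T\otimes A_i\geqslant 0$ of $\frdual{\mcal{C}}_s$ against the maximally entangled vector $v_0=\sum_k e_k\otimes e_k\in\C^s\otimes\C^s$ gives $v_0^*(B_i^T\otimes A_i)v_0=\tr(B_iA_i)$, hence $\frdual{\mcal{C}}_s\subseteq\mcal{C}_s^\vee$. Equality at $s=1$ then reduces via the operator system axiom $v^*Bv\in\mcal{C}_1=C$ (for $B\in\mcal{C}_t$, $v\in\C^t$) to scalar positivity of $a\cdot(v^*Bv)$. Equality for $\mcal{C}=C^{\min}$ falls out of expanding $B=\sum_k c^{(k)}\otimes P_k$ and rewriting $\sum_i B_i^T\otimes A_i=\sum_k P_k^T\otimes\bigl(\sum_i c^{(k)}_iA_i\bigr)$. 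For (ii), closedness and the convex-cone axioms are immediate from the linear-matrix-inequality form; the compression axiom for $\frdual{\mcal{C}}$ follows from the identity $\sum_i B_i^T\otimes V^*A_iV=(I\otimes V)^*\bigl(\sum_i B_i^T\otimes A_i\bigr)(I\otimes V)$; salience and the existence of an order unit reduce via this compression to the $s=1$ case and part (i), using that $C^\vee$ is salient with nonempty interior.

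For (iv), part (i) applied to $C^{\min}$ plus the classical identity $(C^{\min}_s)^\vee=(C^\vee)^{\max}_s$ yield $\frdual{(C^{\min})}=(C^\vee)^{\max}$ at once. For $\frdual{(C^{\max})}=(C^\vee)^{\min}$, the inclusion $\subseteq$ uses (i) together with $(C^{\max}_s)^\vee=(C^\vee)^{\min}_s$; the reverse inclusion is the direct computation that for $A=\sum_k d^{(k)}\otimes P_k\in(C^\vee)^{\min}_s$ and $B\in C^{\max}_t$, each summand $\bigl(\sum_i d^{(k)}_iB_i^T\bigr)\otimes P_k$ in $\sum_i B_i^T\otimes A_i$ is psd, because the definition of $C^{\max}_t$ applied at each $v\in\C^t$ forces $\sum_i d^{(k)}_iB_i\geqslant 0$. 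Part (v) is immediate: the inclusion $\subseteq$ from $\mcal{C},\mcal{D}\subseteq\mcal{C}+\mcal{D}$; the reverse by linearly splitting $B=B^{(1)}+B^{(2)}$ into two psd summands.

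The main content is (iii). The inclusion $\mcal{C}\subseteq\frdual{(\frdual{\mcal{C}})}$ is a symmetry argument: for $A\in\mcal{C}_s$ and $B\in\frdual{\mcal{C}}_t$, the defining condition $\sum_i A_i^T\otimes B_i\geqslant 0$ of $B\in\frdual{\mcal{C}}_t$ transforms via transpose and the swap $\C^s\otimes\C^t\cong\C^t\otimes\C^s$ into $\sum_i B_i^T\otimes A_i\geqslant 0$. The reverse inclusion is the main obstacle: given $A\notin\mcal{C}_s$, I must produce some level $t$ and a separating $B\in\frdual{\mcal{C}}_t$ with $\sum_i B_i^T\otimes A_i\not\geqslant 0$. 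Classical Hahn--Banach in the finite-dimensional space $\her_s(\C)^d$ only yields an element of $\mcal{C}_s^\vee$, which by (i) is generally strictly larger than $\frdual{\mcal{C}}_s$. The bootstrap to an honest element of $\frdual{\mcal{C}}_t$ at some (possibly higher) level $t$ requires a matricial Hahn--Banach / Effros--Winkler-style separation theorem for operator systems; equivalently, one can invoke the Choi--Effros representation $\varphi\colon\mcal{V}\to\mathbb{B}(\mcal{H})$ of $\mcal{C}$ and extract the separating $B$ from a suitable finite-dimensional compression of $\varphi$. This step genuinely uses the full operator-system axioms rather than just closed convex cone structure at each level, and is where I expect essentially all the work to lie.
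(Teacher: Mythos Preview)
Your proposal is correct and follows essentially the same approach as the paper: the maximally entangled vector for the inclusion in (i), the compression identity and reduction to the classical dual at level one for (ii), and---most importantly---the Effros--Winkler matricial separation theorem for the nontrivial inclusion in (iii). The only cosmetic difference is that the paper proves (iii) before (iv) and obtains the second identity $\frdual{(C^{\max})}=(C^\vee)^{\min}$ by applying the first to $C^\vee$ and invoking (iii), whereas you establish both halves of (iv) directly via the classical level-wise dualities $(C^{\min}_s)^\vee=(C^\vee)^{\max}_s$ and $(C^{\max}_s)^\vee=(C^\vee)^{\min}_s$; either route works.
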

\begin{proof}
($i$)  Let $A=(A_1,\ldots,A_d)\in\frdual{\mcal{C}_s}$ and $(B_1,\ldots,B_d)\in\mcal{C}_s$. Set $v=\sum_{j=1}^s e_j\otimes e_j,$ where $e_j\in\C^s$ denotes the $j$-th standard basis vector. Then 
$$
0\leq v^*\left(\sum_{i=1}^d  B_i^T\otimes A_i\right) v=\sum_{i=1}^d\tr(B_i A_i)
$$
holds, and thus $A\in\mcal{C}_s^\vee$. 

Now let $\mcal{C}=C^{\min}$ and take $B=\sum_{j=1}^n c_j\otimes P_j\in C^{\min}_s$ arbitrary, where upon writing $c_j=(c_{j1},\ldots,c_{jd})$ we obtain 
$$
B=\sum_{j=1}^n\left(c_{j1} P_j,\ldots,c_{jd} P_j\right).
$$
Thus $A=(A_1,\ldots,A_d)\in\frdual{(C^{\min})}$ if and only if
$$
0\leqslant \sum_{i=1}^d\left(\sum_{j=1}^n c_{ji}  P_j^T\right) \otimes A_i=\sum_{j=1}^n P_j^T \otimes \left(\sum_{i=1}^dc_{ji} A_i\right)
$$
holds for all $n\in\N, c_1,\ldots,c_n\in C$ and psd matrices $P_1,\ldots, P_n$, which is equivalent to 
\begin{equation}\label{eq: minfrdual}
\sum_{i=1}^d c_i A_i\geqslant 0\qquad\text{for every}\qquad c=(c_1,\ldots,c_d)\in C.
\end{equation}
Now for  $(A_1,\ldots,A_d)\in(C^{\min}_s)^\vee,$ every psd matrix $P\in\her_s(\C),$ and $c=(c_1,\ldots,c_d)\in C$ it holds
$$
0\leq \sum_{i=1}^d\tr\left((c_i P) A_i\right)=\tr\left(P \left(\sum_{i=1}^d  c_i  A_i\right)\right).
$$
By self-duality of the psd cone, this is also equivalent to (\ref{eq: minfrdual}), and therefore $A\in\frdual{(C^{\min})}$.

Finally, in case  $s=1$, take $a\in \mcal C_1^\vee$. Then for every $t\geq 1, B\in \mcal C_t$ and $v\in\C^t$ we obtain  $v^* B^T v\in \mcal  = v^TB\overline v\in C_1,$ and thus
$$
0\leq \sum_{i=1}^d v^*  B_i^T v a_i=v^*\left(\sum_{i=1}^d  B_i^T \otimes a_i\right)v,
$$
so clearly $a\in \frdual{C}_1$.

($ii$) Clearly each $\frdual{\mcal{C}}_s$ is a convex cone. It is  also closed, since 
the cone of psd matrices is closed. Now let $A=(A_1,\ldots,A_d)\in\frdual{\mcal{C}}_s\cap(-\frdual{\mcal{C}}_s),$ which means that  $\sum_{i=1}^d B_i^T\otimes A_i=0$  for every $B\in \mcal{C}.$ Now choose an order unit/interior point $u\in \mcal{C}_1$. Then we find $\varepsilon >0$ such that $u+\varepsilon e_j\in \mcal{C}_1$ for every standard basis vector $e_j\in\C^d$. We obtain 
$$
0= \sum_{i=1}^d (u+\varepsilon e_j)_i\otimes A_i= \varepsilon A_j,
$$
which implies $A_j=0$ for every $j=1,\ldots,d$. Thus $\frdual{C}_s$ is a salient cone. Given $A\in\frdual{\mcal{C}}_s$ and $V\in{\rm Mat}_{s,t}(\C),$ it follows immediately that $V^*A V\in\frdual{\mcal{C}}_t$. It remains to show that $\frdual{\mcal{C}}$ has an order unit. By Remark \ref{rem: opsystem} it is enough to prove existence of an interior point of $\frdual{\mcal{C}}_1$. But since the (classical) dual of a closed salient cone in $\R^d$ has nonempty interior, so does $\frdual{\mcal{C}}_1$ by ($i$).

($iii$) It is easy to see that $\mcal{C}\subseteq \frdual{(\frdual{\mcal{C}})}$. Now let $A=(A_1,\ldots,A_d)\in\her_s(\C)^d\setminus\mcal{C}_s$. By the Effros-Winkler separation theorem \cite{effros}, we find $B_1,\ldots,B_d\in\her_s(\C)$ such that
$$
\sum_{i=1}^dA_i\otimes  B_i^T\ngeqslant 0\qquad\text{and}\qquad \mcal{C}\subseteq\mcal{S}(B_1^T,\ldots, B_d^T).
$$
Then $(B_1,\ldots,B_d)\in\frdual{\mcal{C}}$ but   $A\notin\frdual{(\frdual{\mcal{C}})}$. Note that a non-conic analog of this result can also be found in \cite{helton}.

($iv$) We have 
\begin{align*}
A\in (C^\vee)_s^{\max} & \Longleftrightarrow \forall v\in\C^s:(v^*A_1v,\ldots,v^*A_dv)\in C^\vee\\
& \Longleftrightarrow \forall v\in\C^s\,\forall c\in C: \sum_{i=1}^d c_i v^* A_i v\geq 0\\
& \Longleftrightarrow \forall c\in C: \sum_{i=1}^d c_i A_i\geqslant 0\\
& \Longleftrightarrow  A\in \frdual{(C^{\min})}_{s}
\end{align*}
where the last equivalence is again due to (\ref{eq: minfrdual}). The second statement follows immediately by applying the first one and ($iii$) to $C^\vee.$
Statement ($v$)  is obvious.
\end{proof}

\begin{definition}
An operator system $\mathcal C$ is {\it finitely generated}, if there are finitely many elements $A^{(1)},\ldots, A^{(n)}\in\mathcal C,$ such that $\mathcal C$ is the smallest operator system containing all the $A^{(j)}$. 
\end{definition}

\begin{remark}
($i$) If an operator system is finitely generated, then it is already generated by one element $A\in\mathcal C$. Indeed, a block-diagonal sum $A^{(1)}\oplus \cdots \oplus A^{(n)}$ belongs to $\mathcal C$ if and only if each $A^{(j)}$ belongs to $\mathcal C$. This follows easily from the operator system axioms.

($ii$) The operator system $\mathcal C$ is generated by $A\in\mathcal C_s$ if and only if each element from $\mathcal C$ is of the form $$\sum_i V_i^* A V_i=V^*(A\oplus \cdots \oplus A)V$$ for complex matrices $V, V_i$ (of the correct size). We leave this as an exercise for the reader.
\end{remark}

The following result  was proven in  \cite{helton}, in a non-conic version. For completeness we sketch the proof adapted to our setup.
\begin{theorem}\label{thm: helton}
The operator system $\mathcal C$ is generated by $A=(A_1,\ldots A_d)$ if and only if $\mathcal C^{\vee_{\rm fr}}=\mathcal S(A_1^T,\ldots, A_d^T).$ In particular, $\mcal C$ is finitely generated  if and only if  $\mathcal C^{\vee_{\rm fr}}$ is finite-dimensional realizable, and vice versa.
\end{theorem}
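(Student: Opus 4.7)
The plan is to prove the biconditional directly, then deduce the ``in particular'' statement from it together with Proposition~\ref{prop: freedual}\ref{prop:freedual_dualdual}.

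For the forward direction, assume $\mathcal C$ is generated by $A=(A_1,\ldots,A_d)\in \mathcal C_s$. One inclusion is immediate: since $A\in\mathcal C$, every $X\in \frdual{\mathcal C}$ satisfies $\sum_i A_i^T\otimes X_i\geqslant 0$, so $\frdual{\mathcal C}\subseteq \mathcal S(A_1^T,\ldots,A_d^T)$. For the other inclusion, recall from Remark~2.8($ii$) that every $B\in\mathcal C_t$ has the form $B_i=V^*(I_n\otimes A_i)V$ for some $V\in{\rm Mat}_{ns,t}(\C)$. Taking transposes and using the mixed product identity for the Kronecker product, one gets
$$
\sum_{i=1}^d B_i^T\otimes X_i = (V^T\otimes I_r)\Bigl(I_n\otimes \sum_{i=1}^d A_i^T\otimes X_i\Bigr)(\overline V\otimes I_r),
$$
where $r$ is the size of the $X_i$. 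Thus if $\sum_i A_i^T\otimes X_i\geqslant 0$, then the right-hand side is positive semidefinite as a conjugation of a psd matrix, giving $X\in \frdual{\mathcal C}$.

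For the converse, let $\mathcal C'$ denote the operator system generated by $A$; clearly $\mathcal C'\subseteq \mathcal C$. By the forward direction, $\frdual{\mathcal C'}=\mathcal S(A_1^T,\ldots,A_d^T)$. If by hypothesis $\frdual{\mathcal C}=\mathcal S(A_1^T,\ldots,A_d^T)$, then $\frdual{\mathcal C'}=\frdual{\mathcal C}$, and applying Proposition~\ref{prop: freedual}\ref{prop:freedual_dualdual} yields $\mathcal C'=\mathcal C$.

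For the ``in particular'' part, one direction is immediate: if $\mathcal C$ is finitely generated, then by Remark~2.8($i$) it is generated by some single $A$, and the biconditional just proved identifies $\frdual{\mathcal C}$ with the free spectrahedron $\mathcal S(A_1^T,\ldots,A_d^T)$, which is finite-dimensionally realizable. For the other direction, suppose $\frdual{\mathcal C}=\mathcal S(T_1,\ldots,T_d)$ for Hermitian matrices $T_i$ and set $A_i:=T_i^T$. The remaining task, which is the only slightly subtle step, is to show $A\in\mathcal C$; once this is done the biconditional gives that $\mathcal C$ is generated by $A$. By Proposition~\ref{prop: freedual}\ref{prop:freedual_dualdual} it suffices to show that $\sum_i B_i^T\otimes A_i\geqslant 0$ for every $B\in\frdual{\mathcal C}=\mathcal S(A_1^T,\ldots,A_d^T)$. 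For such $B$ we have $M:=\sum_i A_i^T\otimes B_i\geqslant 0$. Taking the transpose preserves positivity and yields $M^T=\sum_i A_i\otimes B_i^T\geqslant 0$; conjugating by the flip operator on the two tensor factors (which is unitary and hence preserves positivity) transforms this into $\sum_i B_i^T\otimes A_i\geqslant 0$, as required.

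The main obstacle I anticipate is bookkeeping rather than anything conceptual: writing the Kronecker identity that converts $B_i^T=V^T(I_n\otimes A_i^T)\overline V$ into a clean conjugation of $I_n\otimes \sum_i A_i^T\otimes X_i$, and correctly combining transpose with the tensor flip in the last step. Both rely only on the mixed product identity $(P\otimes Q)(R\otimes S)=PR\otimes QS$ and the self-duality of the psd cone under transpose and unitary conjugation.
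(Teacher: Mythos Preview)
Your proof is correct and follows essentially the same route as the paper's (which is much terser): the forward direction unpacks the paper's one-line assertion via the conjugation identity coming from Remark~2.8($ii$), and your converse is exactly the paper's argument of applying the forward direction to the system $\mathcal C'$ generated by $A$ and then invoking Proposition~\ref{prop: freedual}\ref{prop:freedual_dualdual}. One small remark: the claim ``clearly $\mathcal C'\subseteq\mathcal C$'' in your converse is neither needed (the equality $\mathcal C'=\mathcal C$ already follows from equality of the free duals) nor a priori obvious without knowing $A\in\mathcal C$; your explicit verification that $A\in\mathcal C$ in the ``in particular'' part, via transpose-and-flip, is a nice detail the paper leaves implicit.
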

\begin{proof}
If $\mathcal C$ is generated by $A$, then $B\in \mathcal C^{\vee_{\rm fr}}$ if and only if $\sum_i A_i^T\otimes B_i\geqslant 0$, i.e.\ $B\in \mathcal S(A_1^T,\ldots, A_d^T)$. Applying the same argument to the operator system generated by $A$, and then  using  Proposition \ref{prop: freedual} ($iii$)  proves the other direction.
\end{proof}

\begin{example}\label{ex:fritz} We review the results of \cite{fritz} in the context of  Theorem \ref{thm: helton} and Proposition \ref{prop: freedual}. 
So let $C\subseteq\mathbb R^d$ be a salient closed convex cone. We then have  $$C \mbox{ polyhedral} \Leftrightarrow C^{\max}  \mbox{ finite-dimensional realizable} \Leftrightarrow C^{\min} \mbox{ finitely generated}.$$ 
If $C$ is already assumed to be polyhedral, then \begin{align*}C \mbox{ simplex cone} &\Leftrightarrow C^{\min} \mbox{ finite-dimensional realizable } \Leftrightarrow  C^{\max}\ \text{ finitely generated } \\ &\Leftrightarrow C^{\min}=C^{\max}.\end{align*}
In the following we will examine operator systems with important non-polyhedral cones at level one.
\end{example}

\subsection{Operator Systems Over the Cone of Positive Matrices}
\label{subsec: psdcones}
In the following let $\mcal{V}={\rm Mat}_d(\C)$ with the usual involution $*$, thus $\mcal{V}_h=\her_d(\C)$, and let $C\subseteq\her_d(\C)$ be the cone of positive semidefinite complex $d\times d$ matrices. This cone gives rise to several operator systems  with $C$ at level one. 

The smallest such operator system $C^{\min}$ in this situation is also denoted by $\op{Sep}_d=(\op{Sep}_{d,s})_{s\in\N}$ where
$$
\op{Sep}_{d,s}:=\left\{\sum_{i=1}^n A_i\otimes B_i\in{\rm Mat}_d(\C)\otimes{\rm Mat}_s(\C)\biggm| n\in\N,A_i\geqslant 0, B_i\geqslant 0\right\},
$$
and is called the operator system of {\it separable matrices}, see \cite{hor}. The maximal system $C^{\max}$ is known as the operator system of {\it block positive matrices} $\op{Bpsd}_{d}=(\op{Bpsd}_{d,s})_{s\in\N}$, where each level is defined as
\begin{align*}
\op{Bpsd}_{d,s}:=\Bigg\{\sum_{i=1}^n A_i\otimes B_i \biggm|n\in\N,(x\otimes y)^*\bigg(\sum_{i=1}^n A_i\otimes B_i\bigg)(x\otimes y)\geq 0\  \forall  x\in\C^d, y\in\C^s \Bigg\}.
\end{align*}
 In between the separable and block positive systems lie the operator system of positive semidefinite matrices $\op{Psd}_d=(\op{Psd}_{d,s})_{s\in\N}$ and the operator system of matrices with positive partial transpose  $\op{Psd}_d^\Gamma=(\op{Psd}^{\Gamma}_{d,s})_{s\in\N}$, where 
$$
\op{Psd}_{d,s}:=\left\{\sum_{i=1}^n A_i\otimes B_i\in{\rm Mat}_d(\C)\otimes{\rm Mat}_s(\C)\biggm|n\in\N, \sum_{i=1}^n A_i\otimes B_i\geqslant 0\right\}
$$
and
$$\op{Psd}^{\Gamma}_{d,s}:=\left\{\sum_{i=1}^n A_i\otimes B_i\in{\rm Mat}_d(\C)\otimes{\rm Mat}_s(\C)\biggm| n\in\N,\sum_{i=1}^n A_i^T\otimes B_i\geqslant 0\right\}
$$
respectively. The mapping 
$$
\Gamma\colon {\rm Mat}_d(\C)\otimes{\rm Mat}_s(\C)\rightarrow {\rm Mat}_d(\C)\otimes{\rm Mat}_s(\C)\colon \sum_{i=1}^n A_i\otimes B_i\mapsto \sum_{i=1}^n A_i^T\otimes B_i
$$
is called the {\it partial transpose} and we write $X^\Gamma$ for $\Gamma(X)$ where $X\in  {\rm Mat}_d(\C)\otimes{\rm Mat}_s(\C)$. 

The level-wise intersection of two operator systems with the same cone at level one is clearly again such an operator system. 
Thus we obtain the system $\op{Dpsd}_{d}=(\op{Dpsd}_{d,s})_{s\in\N}$ where $$\op{Dpsd}_{d,s}:=\op{Psd}_{d,s}\cap\op{Psd}^\Gamma_{d,s}.$$ We call this the operator system of \textit{doubly positive matrices}. 

Now the operator system of \textit{decomposable matrices} $\op{Decomp}_{d}=(\op{Decomp}_{d,s})_{s\in\N}$ is defined as the level-wise Minkowski sum of the systems $\op{Psd}_d$ and $\op{Psd}_d^\Gamma$, i.e.\ by
$$
\op{Decomp}_{d,s}:=\left\{ X+Y\big|X\in\op{Psd}_{d,s},Y\in\op{Psd}^{\Gamma}_{d,s}\right\}\,.
$$
Using Lemma \ref{lem: sumclosed} from the appendix it follows immediately that $\op{Decomp}_d$ is indeed an operator sytem. It is well known that for $(d,s)\in\lbrace (2,2),(2,3),(3,2)\rbrace$ one has $\op{Decomp}_{d,s}=\op{Bpsd}_{d,s}$, see \cite{stormer,wor}. Otherwise decomposable matrices form a strict subset of the block positive matrices, see again \cite{wor} and \cite{tang} for an explicit counterexample. 
  See Figure \ref{fig: cones} for a first schematic overview over the described operator systems.

There is a connection between the above mentioned cones and cones of linear maps between matrix spaces. By the \textit{Choi-Jamio\l kowski} isomorphism \cite{jami,choimatrix}, every linear map $T\colon {\rm Mat}_d(\C)\rightarrow {\rm Mat}_s(\C)$ is uniquely defined by its \textit{Choi matrix} 
\begin{equation}\label{eq: choimatrix}
C_T:=\sum_{i,j=1}^d E_{ij}\otimes T(E_{ij})\in {\rm Mat}_d(\C)\otimes{\rm Mat}_s(\C),
\end{equation}
where $E_{ij}\in{\rm Mat}_d(\C)$ denotes the $ij$-th matrix unit, i.e.\ the matrix whose entry at position $(i,j)$ is one while all the others are zero. 

A map with a separable Choi matrix is called \textit{entanglement-breaking}, see \cite{hor}. 

The map $T$ is called {\it positive}  if it maps psd matrices to psd matrices. It is easy to check that $T$ is positive if and only if  its Choi matrix is  block positive, i.e.\ $C_T\in\op{Bpsd}_{d,s}.$

By a well known-theorem of Choi \cite{choimatrix}, $T$ is \textit{completely positive}, i.e.\ $ {\rm id}_n\otimes T$ is positive for every $n$, if and only if the  Choi matrix  is psd, i.e.\ $C_T\in \op{Psd}_{d,s}$.

Let 
$$
\vartheta_d\colon{\rm Mat}_d(\C)\rightarrow{\rm Mat}_d(\C)\colon M\mapsto M^T
$$
denote matrix transposition. Then $T$ is called \textit{completely copositive}, if $T\circ\vartheta_d$ is completely positive, which is equivalent to $C_T\in\op{Psd}^{\Gamma}_{d,s}$.

Not surprisingly, $T$ is called {\it doubly completely positive} if $T$ is both completely positive and completely copositive, which just means $C_T\in  \op{Dpsd}_{d,s}.$

Finally, $T$ is called {\it decomposable} if it is the sum of a completely positive and a completely copositive map, which is by the above identification equivalent to $C_T\in\op{Decomp}_{d,s}$. 

Therefore each of the above defined cones/operator systems corresponds to a system of positive linear maps. 

\begin{figure}
\begin{equation*}
\begin{tikzcd}[column sep=0.1em]
\vdots&\vdots&\vdots&&\vdots&&\vdots&&\vdots \\
\op{Sep}_{d,3} \quad\subseteq \arrow[u,dash] & \quad\op{Dpsd_{d,3}}\quad\subseteq \arrow[u,dash]  & \op{Psd}_{d,3} \arrow[u,dash] & +  &\op{Psd}^\Gamma_{d,3} \arrow[u,dash]& = &\quad \op{Decomp}_{d,3} \arrow[u,dash] \quad \subseteq& {}& \op{Bpsd}_{d,3}\arrow[u,dash]\\
\op{Sep}_{d,2}\quad \arrow[u,dash]  \subseteq &\quad\op{Dpsd_{d,2}}\quad\subseteq  \arrow[u,dash]  &  \op{Psd}_{d,2} \arrow[u,dash]& + & \op{Psd}^\Gamma_{d,2} \arrow[u,dash]&= & \quad\op{Decomp}_{d,2} \arrow[u,dash] \quad\subseteq &  {} & \op{Bpsd}_{d,2}\arrow[u,dash]\\[20pt]
&&&&C=\op{Psd}_{d,1} \arrow[ullll,dash] \arrow[ulll,dash]\arrow[ull,dash] \arrow[u,dash] \arrow[urr,dash]\arrow[urrrr,dash]&&&&
\end{tikzcd}
\end{equation*}
\caption{Schematic representation of the relationships between the cones discussed in Section \ref{subsec: psdcones}.}
\label{fig: cones}
\end{figure}
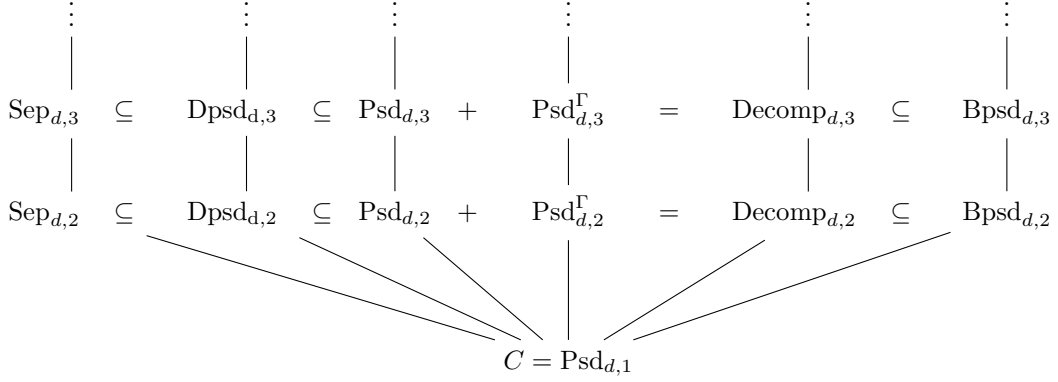

In the remainder of this section we will collect several easy or  known results on the operator systems that we have now introduced. First note that for $d=1$ all the above systems coincide, at level $s$ we just obtain the cone of  psd matrices of size $s$. So we will restrict to $d\geq 2$ from now on.

\begin{remark}\label{rem:warning}
A word of warning is appropriate before we proceed. For the definition of duals and free duals in Section \ref{subsec: cones} we have expressed elements as $d$-tuples, by choosing an orthonormal basis of $\mcal{V}_h$. The level-wise dual $\mathcal C_s^{\vee}$ as well as the free dual $\mathcal C^{\vee_{\rm fr}}$ are defined via this choice. On $C=\op{Psd}_{d,1}$ we will always use the trace inner product. But if $A\in {\rm Her}_d(\C)\otimes {\rm Her}_s(\C)$ is written as $$A=(A_{ij})_{i,j=1,\ldots, d}=\sum_{i,j=1}^d E_{ij}\otimes A_{ij}\in {\rm Her}_{ds}(\C)$$ then this is not a valid basis expansion, since the $E_{ij}$ do not form an orthonormal basis of ${\rm Her}_d(\C)$. However, by expressing everything w.r.t.\ to an orthonormal basis instead,  one immediately checks that  the inner product used for the definition of the level-wise dual in Section  \ref{subsec: cones} does coincide with the usual trace inner product on ${\rm Her}_{d}(\C)\otimes{\rm Her}_{s}(\C)={\rm Her}_{ds}(\C)$. The "tensor inner product" used for the definition of the free dual becomes $$\sum_{ij} B_{ij}^T\otimes A_{ji}$$ for $A=(A_{ij})_{i,j=1,\ldots, d}\in {\rm Her}_d(\C)\otimes{\rm Her}_s(\C)$ and  $B=(B_{ij})_{i,j=1,\ldots, d}\in {\rm Her}_d(\C)\otimes{\rm Her}_t(\C).$
\end{remark}

\begin{proposition}\label{prop:dualsys}
($i$) We have  $\frdual{\op{Sep}_d}=\op{Bpsd}_d$ and $\frdual{\op{Bpsd}_d}=\op{Sep}_d$. Both systems are neither finitely generated nor finite-dimensional realizable.

($ii$) $\op{Psd}_d^{\vee_{\rm fr}}=\op{Psd}_d,$ $(\op{Psd}^{\Gamma}_d)^{\vee_{\rm fr}}=\op{Psd}^{\Gamma}_d,$ and both systems are  both finitely  generated and finite-dimensional realizable. 

($iii$) $\op{Dpsd}_d^{\vee_{\rm fr}}=\op{Decomp}_d$ and  $\op{Decomp}_d^{\vee_{\rm fr}}=\op{Dpsd}_d.$ Furthermore the system $\op{Dpsd}_d$ is finite-dimensional realizable, the system  $\op{Decomp}_d$ is finitely generated.
\end{proposition}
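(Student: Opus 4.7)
The plan is to handle the three parts in sequence. Parts~(i) and~(iii) are formal consequences of Proposition~\ref{prop: freedual}, Theorem~\ref{thm: helton}, and Example~\ref{ex:fritz}; the substantive work is in~(ii), where self-duality must be verified by hand.

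For~(i), I would apply Proposition~\ref{prop: freedual}$(iv)$ together with the self-duality $C^\vee = C$ of the psd cone under the trace inner product, which gives at once
\[
\frdual{\op{Sep}_d} = \frdual{(C^{\min})} = (C^\vee)^{\max} = \op{Bpsd}_d,
\]
and dually $\frdual{\op{Bpsd}_d} = \op{Sep}_d$. Since $C = \op{Psd}_{d,1}$ is not polyhedral for $d\geq 2$, Example~\ref{ex:fritz} yields $\op{Sep}_d$ not finitely generated and $\op{Bpsd}_d$ not finite-dimensional realizable. The remaining two non-properties---$\op{Sep}_d$ not finite-dimensional realizable and $\op{Bpsd}_d$ not finitely generated---are equivalent to each other via Theorem~\ref{thm: helton}; they follow from the full Fritz--Netzer--Thom analysis in~\cite{fritz} (of which Example~\ref{ex:fritz} records only the polyhedral case), since the psd cone is not a simplex.

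For~(ii), the systems $\op{Psd}_d$ and $\op{Psd}^\Gamma_d$ are free spectrahedra by construction: fixing an orthonormal basis $\{F_k\}_{k=1}^{d^2}$ of $\her_d(\C)$ and writing $A = \sum_k F_k \otimes A_k$, the conditions $A\geq 0$ and $A^\Gamma\geq 0$ translate into the LMIs $\sum_k F_k \otimes A_k \geq 0$ and $\sum_k F_k^T \otimes A_k \geq 0$, so $\op{Psd}_d = \mcal S(F_1,\ldots,F_{d^2})$ and $\op{Psd}^\Gamma_d = \mcal S(F_1^T,\ldots,F_{d^2}^T)$, both finite-dimensional realizable. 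For finite generation, the maximally entangled element $\omega := \sum_{i,j} E_{ij}\otimes E_{ij} = |\Omega\rangle\langle\Omega| \in \op{Psd}_{d,d}$ is a single generator: for any $V \in {\rm Mat}_{d,t}(\C)$ the element $V^*\omega V$ is (in matrix form) a rank-one psd matrix in $\her_{dt}(\C)$, and every rank-one psd matrix arises this way, so summing recovers $\op{Psd}_{d,t}$. The analogous argument gives $\omega^\Gamma$ as a generator of $\op{Psd}^\Gamma_d$. Self-duality then reduces to a direct calculation using Remark~\ref{rem:warning}: for rank-one $A = |v\rangle\langle v|$ and $B = |w\rangle\langle w|$ with $v = \sum_i e_i\otimes v_i$, $w = \sum_i e_i\otimes w_i$, the pairing factors as
\[
\sum_{i,j} B_{ij}^T \otimes A_{ji} \;=\; \eta\eta^*, \qquad \eta := \sum_j \overline{w_j} \otimes v_j,
\]
which is visibly psd; the converse follows by specializing $B = \omega$, which makes $\eta$ range over all of $\C^{ds}$ as $\xi$ varies and thus forces $A\geq 0$. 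Conjugating by $\Gamma$ gives $\frdual{\op{Psd}^\Gamma_d} = \op{Psd}^\Gamma_d$.

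For~(iii), Proposition~\ref{prop: freedual}$(v)$ and biduality $(iii)$ yield
\[
\frdual{\op{Decomp}_d} = \frdual{(\op{Psd}_d + \op{Psd}^\Gamma_d)} = \frdual{\op{Psd}_d} \cap \frdual{\op{Psd}^\Gamma_d} = \op{Psd}_d \cap \op{Psd}^\Gamma_d = \op{Dpsd}_d,
\]
and hence $\frdual{\op{Dpsd}_d} = \op{Decomp}_d$ as well. Finite-dimensional realizability of $\op{Dpsd}_d$ holds because the intersection of two free spectrahedra is itself a free spectrahedron, with defining tuple the block-diagonal sum $(F_k \oplus F_k^T)_k$. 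Finite generation of $\op{Decomp}_d$ then follows either via Theorem~\ref{thm: helton} (dually from the realizability of $\op{Dpsd}_d$) or directly, since a Minkowski sum of operator systems generated by single elements $\omega$ and $\omega^\Gamma$ is generated by the block-diagonal sum $\omega\oplus\omega^\Gamma$. The main obstacle throughout is the self-duality check in~(ii): the pairing from Remark~\ref{rem:warning} twists $B$ by a transpose and $A$ by a block-index swap, and the rank-one factorization above is precisely what makes the identity fall out cleanly.
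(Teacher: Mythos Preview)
Your treatment of (ii) and (iii) is correct but organised differently from the paper in (ii). For $\frdual{\op{Psd}_d}=\op{Psd}_d$ the paper gets the inclusion $\frdual{\op{Psd}_d}\subseteq\op{Psd}_d$ for free from Proposition~\ref{prop: freedual}\ref{prop:freedual_dual} and level-wise self-duality of the psd cone, and for the reverse inclusion simply observes that $B^T\otimes A\geqslant 0$ whenever $A,B\geqslant 0$ and compresses the first two tensor legs with the maximally entangled vector $\sum_r e_r\otimes e_r$ to obtain $\sum_{i,j}B_{ij}^T\otimes A_{ji}\geqslant 0$. Your rank-one factorisation $\eta\eta^*$ is a hands-on version of the same identity. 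The paper then deduces finite generation of $\op{Psd}_d$ from self-duality via Theorem~\ref{thm: helton} rather than exhibiting the generator $\omega$ directly as you do (Remark~\ref{rem:choi} records exactly your observation afterwards). One cosmetic slip: in your converse step you mention an undefined $\xi$; the clean statement is that specialising $B=\omega$ collapses the pairing to $A$ itself, forcing $A\geqslant 0$. Part~(iii) matches the paper.

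In (i) the duality and the first pair of non-properties match the paper. For the remaining pair---$\op{Sep}_d$ not finite-dimensional realizable, equivalently $\op{Bpsd}_d$ not finitely generated---the paper does \emph{not} use a simplex criterion. Example~\ref{ex:fritz} records the equivalence ``$C$ simplex $\Leftrightarrow C^{\min}$ finite-dimensional realizable'' only under the standing hypothesis that $C$ is polyhedral, which the psd cone is not; accordingly the paper instead invokes \cite{fawzi}, which shows that $\op{Sep}_{d,s}$ fails even to be a spectrahedral shadow for $d+s>5$, hence $\op{Sep}_d$ is not a free spectrahedron, and Theorem~\ref{thm: helton} then gives the statement for $\op{Bpsd}_d$. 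Your appeal to a ``full Fritz--Netzer--Thom analysis'' beyond Example~\ref{ex:fritz} may well be justifiable from \cite{fritz}, but it is not the paper's argument and is not supported by what the paper actually quotes.
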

\begin{proof}
($i$) The duality follows from  Proposition \ref{prop: freedual} ($iv$), together with self-duality of $C=\op{Psd}_{d,1}.$ Since $C$ is not polyhedral, the maximal system $\op{Bpsd}_d$ is not finite-dimensional realizable \cite{fritz}, c.f.\ Example \ref{ex:fritz}. By  Theorem \ref{thm: helton}  $\op{Sep}_d$ is thus not finitely generated.

For $d+s>5$ it has been shown  \cite{fawzi} that  $\op{Sep}_{d,s}$ is not a (classical) spectrahedral shadow, i.e.\ not the linear image of a (classical) spectrahedron. 
In particular it is not a spectrahedron, and in particular $\op{Sep}_d$ is not a free spectrahedron, i.e.\ not finite-dimensional realizable. By  Theorem \ref{thm: helton}  $\op{Bpsd}_d$ is not finitely generated.

($ii$) From Proposition \ref{prop: freedual} ($i$) we get  $\left(\op{Psd}_d^{\vee_{\rm fr}}\right)_s\subseteq \op{Psd}_{d,s}^\vee= \op{Psd}_{d,s}$ for each $s\geq 1$, so $\op{Psd}_d^{\vee_{\rm fr}}\subseteq \op{Psd}_d.$ For the other inclusion let $A=\sum_{i,j=1}^dE_{ij}\otimes A_{ij}\in\op{Psd}_{d,s}$ and $B=\sum_{i,j=1}^dE_{ij}\otimes B_{ij}\in\op{Psd}_{d,t}$. Then $$B^T\otimes A=\sum_{i,j,k,l}(E_{ij}^T\otimes E_{kl})\otimes (B_{ij}^T\otimes A_{kl})\geqslant 0$$ is psd, and upon  compressing the two tensor factors on the left with $\sum_{r=1}^de_r\otimes e_r$ we obtain the psd matrix  $$\sum_{i,j} B_{ij}^T\otimes A_{ji}\geqslant 0.$$ But this exactly means that $A\in \op{Psd}_{d}^{\vee_{\rm fr}}$ c.f.\ Remark \ref{rem:warning}. 

 Since  $\op{Psd}_{d}^{\Gamma}$ arises from   $\op{Psd}_{d}$ by the level-wise partial transpose map $\Gamma$, the free self-duality follows easily from the statement for $\op{Psd}_d$.
 
By the very definition, $\op{Psd}_d$ is finite-dimensional realizable (c.f.\ Remark \ref{rem:choi}). By self-duality and Theorem \ref{thm: helton} it is thus also finitely generated. The same is true for  $\op{Psd}_d^{\Gamma}.$

($iii$) The duality statement is immediate from ($ii$) and Proposition  \ref{prop: freedual} ($v$).  As the intersection of two systems with a finite-dimensional realization, $\op{Dpsd}_d$ has a finite-dimensional realization as well (e.g.\ the block diagonal sum realization), and as the Minkowski sum of two finitely generated systems, $\op{Decomp}_d$ is again finitely generated.
 \end{proof}

\begin{remark}\label{rem:choi}
Theorem   \ref{thm: helton}, when applied to $\op{Psd}_d$ as in the last proof, recovers Choi's characterization of all completely positive maps as of the form $A\mapsto \sum_i V_i^*AV_i$. Indeed when choosing the orthonormal basis $$E_{ii}, \frac{1}{\sqrt{2}}\left(E_{ij}+E_{ji}\right), \frac{i}{\sqrt{2}}\left(E_{ij}-E_{ji}\right)$$ of ${\rm Her}_d(\C)$, these are precisely the coefficient matrices defining $\op{Psd}_d$ as a free spectrahedron (and thus give rise to a finite-dimensional realization). So the dual, $\op{Psd}_d$ itself, is generated by the corresponding tuple of transposed matrices, which is easily checked to be $$\sum_{i,j}E_{ij}\otimes E_{ij},$$  the Choi matrix of the map ${\rm id}_d\colon {\rm Mat}_d(\C)\to{\rm Mat}_d(\C)$.
\end{remark}

\section{Main Results}
The following is our main result.
\begin{theorem}\label{thm: decompnospec}
For $d\geq 2$, the operator system $\op{Decomp}_d$ of decomposable matrices  does not admit a finite-dimensional realization, and the operator system $\op{Dpsd}_d$ of doubly positive matrices is not finitely generated.
\end{theorem}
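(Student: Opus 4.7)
The plan is to exploit the free self-duality pair $(\op{Decomp}_d,\op{Dpsd}_d)$ established in Proposition \ref{prop:dualsys}(iii). Combined with Theorem \ref{thm: helton}, this shows that $\op{Decomp}_d$ is finite-dimensional realizable if and only if $\op{Dpsd}_d$ is finitely generated, so the two assertions of the theorem are equivalent, and it suffices to prove the first.

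To rule out a finite-dimensional realization of $\op{Decomp}_d$ it is enough to exhibit a single level $s$ at which $\op{Decomp}_{d,s}$ fails to be a classical spectrahedron, since every level of a free spectrahedron is one. I would first reduce to $d=2$. Given a matrix $X$ supported in the upper-left $2s\times 2s$ block of ${\rm Mat}_d(\C)\otimes {\rm Mat}_s(\C)$, any decomposition $X=A+B$ with $A\geqslant 0$ and $B^\Gamma\geqslant 0$ can be compressed by $P\otimes I_s$, where $P=I_2\oplus 0_{d-2}$ is the real diagonal projection onto the first two coordinates of $\C^d$. Since $P$ is symmetric, this compression commutes with $\Gamma$ and preserves the positivity of both summands, so we may always take $A,B$ supported in the same block. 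Hence the $2s\times 2s$ corner of $\op{Decomp}_{d,s}$ equals $\op{Decomp}_{2,s}$, and since a linear slice of a classical spectrahedron is again one, it suffices to prove that $\op{Decomp}_{2,s}$ fails to be a classical spectrahedron for some $s$.

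The first candidate level is $s=4$, since $\op{Decomp}_{2,s}=\op{Bpsd}_{2,s}$ for $s=2,3$ as noted in the preliminaries. I would pick a concrete two-dimensional real linear subspace $L\subseteq\her_8(\C)$ with enough symmetry (for example, invariance under a finite group of local unitaries) to keep the algebra tractable, and then describe $K:=\op{Decomp}_{2,4}\cap L$ explicitly. This is the main technical obstacle: a matrix in $L$ may be decomposable while the summands $A\geqslant 0$, $B^\Gamma\geqslant 0$ achieving this lie outside $L$, so one cannot simply intersect the two summand cones with $L$. One must instead solve the projection problem for $\op{Decomp}_{2,4}$ onto $L$, which is a semidefinite program. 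The plan is to exploit the isotropy group $G$ of $L$ and average any candidate decomposition over $G$, restricting to $G$-invariant summands, reducing the problem to few free parameters and, with luck, a closed-form description of $K$.

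Finally, I would verify that $K$ is not a classical spectrahedron by exhibiting two separate obstructions. Every classical spectrahedron is basic closed semialgebraic, and in two dimensions its boundary is contained in a real algebraic curve satisfying a rigid-convexity (Helton--Vinnikov type) condition. I expect the explicit description of $K$ to contain a transcendental boundary arc, violating the semialgebraic condition, and in addition a boundary point whose local curvature is incompatible with any rigidly convex algebraic curve passing through it. Either obstruction alone rules out $K$ being a spectrahedron, so that $\op{Decomp}_{2,4}$ is not a classical spectrahedron. By the reduction steps above, this proves that $\op{Decomp}_d$ admits no finite-dimensional realization, and hence, by free duality, that $\op{Dpsd}_d$ is not finitely generated.
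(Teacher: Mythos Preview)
Your global strategy is right---use the free duality of Proposition \ref{prop:dualsys}(iii) and Theorem \ref{thm: helton} to reduce both claims to showing some level of $\op{Decomp}_{2}$ is not a classical spectrahedron, and then slice by a well-chosen two-dimensional affine subspace. The reduction from $d$ to $2$ via compression by $P\otimes I_s$ is also fine and is essentially Lemma \ref{lem: embed} of the paper.

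But there is a genuine misstep in your choice of level. You discard $s=2,3$ \emph{because} $\op{Decomp}_{2,s}=\op{Bpsd}_{2,s}$ there, and move to $s=4$. This is backwards. That equality at $s=2$ is precisely what the paper exploits: decomposability is a projection condition (existence of summands outside the slice), hence hard to test directly, whereas block positivity is a pointwise nonnegativity condition that one can check on a slice without ever leaving it. Using $\op{Decomp}_{2,2}=\op{Bpsd}_{2,2}$ turns the intractable projection problem you describe into the explicit computation of Lemma \ref{lem: biquad}, yielding a two-parameter region $S_1\cup S_2$ bounded by two parabolas and a line. This region visibly has non-exposed extreme points at $(\pm 1,0)$ and fails Proposition \ref{prop: sinn}, so it is not basic closed---hence not a spectrahedron.

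Your proposed obstructions at $s=4$ would not work as stated. Every level of every operator system considered here is semialgebraic by real quantifier elimination (the paper notes this in the appendix), so a ``transcendental boundary arc'' cannot occur. The rigid-convexity heuristic is in the right spirit, but you would still need to \emph{compute} the slice, and at $s=4$ the symmetry/averaging plan you sketch is speculative. The key idea you are missing is to stay at $s=2$ and let the St{\o}rmer--Woronowicz coincidence do the hard work of eliminating the summands for you.
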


We will prove Theorem \ref{thm: decompnospec} by showing that $\op{Decomp}_{2,2}$ is not a classical spectrahedron, and this by exhibiting a two-dimensional subspace on which $\op{Decomp}_{2,2}$ fails to fulfill even two necessary conditions for having a linear matrix inequality definition. However, the hard part lies in determining how the intersection of $\op{Decomp}_{2,2}$ with our subspace looks like. As the  convex hull of two easy sets, the intersection with a subspace is not necessarily the convex hull of the two subsets intersected with the subspace. This can be seen in Figure \ref{fig:pluspsd}. We solve this problem by using that $\op{Decomp}_{2,2}$ coincides with $\op{Bpsd}_{2,2},$ and examining the latter.
We start with some  intermediate results.
\begin{lemma}\label{lem: biquad}
For $a,b\in\mathbb{R}$, consider the following biquadratic form in the complex variables $x_1,x_2$ with matrix coefficients:
$$
p(x_1,x_2)=\begin{pmatrix}
1&0\\0&\frac{1}{4}
\end{pmatrix}
| x_1|^2+
\begin{pmatrix}
0&\frac{a}{4}\\\frac{a}{4}&0
\end{pmatrix}(\overline{x}_1x_2+x_1\overline{x}_2)+
\begin{pmatrix}
b&0\\0&\frac{1}{4}
\end{pmatrix}
| x_2|^2.
$$
Then $p$ is (globally)  positive semidefinite if and only if
\begin{align*}
(a,b)\in S_1\cup S_2
\end{align*}
where
\begin{align}\label{lem: biquadsets}
\begin{split}
& S_1:=\{(a,b)\in\mathbb{R}^2 \,|\, (b+1-a^2)^2-4b\leq 0\},\\
& S_2:=\{(a,b)\in\mathbb{R}^2\,| \, (b+1-a^2)^2-4b\geq 0, b\geq 0,a^2-b-1\leq 0\}.
\end{split}
\end{align}
\end{lemma}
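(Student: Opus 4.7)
The plan is to reduce global positive semidefiniteness of $p$ to a univariate quadratic inequality on $[0,\infty)$, and then characterize when the latter holds via the discriminant and Vieta's formulas. First I would observe that $p(x_1,x_2)$ is a $2\times 2$ Hermitian matrix for each fixed $(x_1,x_2)$, so $p\geqslant 0$ globally is equivalent to its diagonal entries and determinant being nonnegative for all $(x_1,x_2)\in\C^2$. Setting $r_1:=|x_1|^2$, $r_2:=|x_2|^2$, and $s:=\overline{x}_1 x_2 + x_1 \overline{x}_2\in\R$, the upper-left entry $r_1+br_2$ is nonnegative for all admissible $r_1,r_2\geq 0$ precisely when $b\geq 0$ (a condition already built into both $S_1$ and $S_2$), while the lower-right entry $\tfrac14(r_1+r_2)$ is automatically nonnegative.

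Second, a direct expansion gives
$$16\,\det p(x_1,x_2) = 4(r_1+br_2)(r_1+r_2) - a^2 s^2.$$
As $(x_1,x_2)$ ranges over pairs with $|x_1|^2=r_1, |x_2|^2=r_2$, the real parameter $s$ sweeps out exactly $[-2\sqrt{r_1 r_2},\, 2\sqrt{r_1 r_2}]$ (write $x_1=\sqrt{r_1}$, $x_2=e^{i\theta}\sqrt{r_2}$ to see this). So nonnegativity of $\det p$ on all of $\C^2$ is equivalent, by evaluating at the extremal value $s^2=4r_1r_2$, to
$$q(r_1,r_2):=r_1^2+(1+b-a^2)r_1 r_2 + b r_2^2 \;\geq\; 0 \quad \text{for all } r_1,r_2\geq 0.$$

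Third, I would analyse $q$ by dividing out $r_2^2$ (the case $r_2=0$ is trivial) and setting $t:=r_1/r_2\geq 0$, reducing the question to when the univariate quadratic $f(t)=t^2+(1+b-a^2)t+b$ is nonnegative on $[0,\infty)$. Its discriminant is exactly $(b+1-a^2)^2-4b$. If this is $\leq 0$, then $f$ has no real roots (or a repeated real root) and, since the leading coefficient is $1>0$, $f\geq 0$ on all of $\R$; this is the region $S_1$, and $b\geq 0$ follows automatically from $4b\geq (b+1-a^2)^2\geq 0$. If instead the discriminant is positive, $f$ has two distinct real roots, and $f\geq 0$ on $[0,\infty)$ iff both roots lie in $(-\infty,0]$. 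By Vieta, the product of roots is $b$ and the sum is $-(1+b-a^2)=a^2-b-1$, so this happens iff $b\geq 0$ and $a^2-b-1\leq 0$, which is the region $S_2$ (the boundary discriminant$\,=0$ case is covered consistently by both descriptions).

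There is no real obstacle; the only non-obvious step is recognising that the worst case for the determinant occurs at $s^2=4r_1r_2$, after which everything reduces to a standard analysis of the sign of a quadratic on a half-line.
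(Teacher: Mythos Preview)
Your proof is correct and follows essentially the same route as the paper: both reduce to the nonnegativity of the same univariate quadratic $t^2+(1+b-a^2)t+b$ on $[0,\infty)$ and finish via the discriminant and Vieta. The only cosmetic difference is in the reduction step---the paper dehomogenizes to $p(z,1)$ and observes that real $z$ suffices (the imaginary part contributes a psd term), whereas you optimize over the phase to hit $s^2=4r_1r_2$---but these are equivalent ways of reaching the same inequality.
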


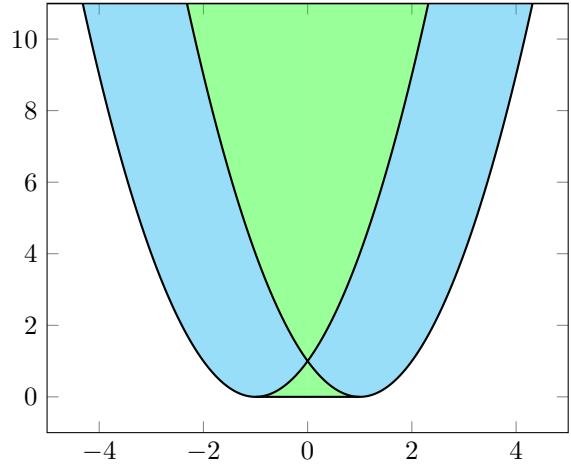
\begin{figure}
\begin{tikzpicture}
\begin{axis}[
      xmin=-5,xmax=5,
      ymin=-1,ymax=11,
      ]
	\plot[name path=A, thick,samples=200,domain=-5:5] {1-2*x+x^2};
	\plot[name path=B,thick,samples=200,domain=-5:5] {1+2*x+x^2};
	\plot[name path=X,draw=none,samples=200,domain=-5:5]{0};
		\plot[name path=Y,thick,samples=200,domain=-1:1]{0};
	\plot[name path=C,thick,samples=200,domain=-5:5]{11};
	\addplot[fill=cyan,opacity=.4] fill between [of=A and B, soft clip={domain=-5:5}];
		\addplot[fill=green,opacity=.4] fill between [of=B and X,soft clip={domain=-1:0}];
	\addplot[fill=green,opacity=.4] fill between [of=A and X,soft clip={domain=0:1}];
			\addplot[fill=green,opacity=.4] fill between [of=C and B,soft clip={domain=0:5}];
		\addplot[fill=green,opacity=.4] fill between [of=C and A,soft clip={domain=-5:0}];
	\end{axis}
\end{tikzpicture}
\caption{Section of the area $S_1\cup S_2$ of Lemma \ref{lem: biquad}. The blue part depicts $S_1$, the green one $S_2$.}\label{fig: biquad}
\end{figure}
\begin{proof}
First note that both global positivity of $p$ as well as  $(a,b)\in S_1\cup S_2$ requires $b\geq 0$, so we can assume this throughout the proof.

By multiplying/dividing  $p$ by $1/| x_2|^2$ one easily checks that
\begin{align*}
\forall x_1,x_2\in\C:p(x_1,x_2) & \geqslant 0\\
&\Longleftrightarrow\forall z\in\C:
p(z,1)=\begin{pmatrix}
1&0\\0&\frac{1}{4}
\end{pmatrix}
| z|^2+
\begin{pmatrix}
0&\frac{a}{2}\\\frac{a}{2}&0
\end{pmatrix}\op{Re}(z)+
\begin{pmatrix}
b&0\\0&\frac{1}{4}
\end{pmatrix}\geqslant 0.
\end{align*}

Next we convince ourselves that  $p(z,1)\geqslant 0$ for all  $z\in\R$ already implies $p(z,1)\geqslant 0$ for all $z\in\C$. Indeed for $\alpha,\beta\in\R$ and $z=\alpha+\mathrm{i}\beta$ we have
\begin{multline*}
p(z,1)=\begin{pmatrix}
1&0\\0&\frac{1}{4}
\end{pmatrix}
(\alpha^2+\beta^2)+
\begin{pmatrix}
0&\frac{a}{2}\\\frac{a}{2}&0
\end{pmatrix}\alpha+
\begin{pmatrix}
b&0\\0&\frac{1}{4}
\end{pmatrix}
\\=\underbrace{
\begin{pmatrix}
1&0\\0&\frac{1}{4}
\end{pmatrix}\alpha^2+
\begin{pmatrix}
0&\frac{a}{2}\\\frac{a}{2}&0
\end{pmatrix}\alpha+
\begin{pmatrix}
b&0\\0&\frac{1}{4}
\end{pmatrix}}_{=p(\alpha,1)\geqslant 0}+
\underbrace{\begin{pmatrix}
1&0\\0&\frac{1}{4}
\end{pmatrix}
\beta^2}_{\geqslant 0}\geqslant 0.
\end{multline*}
Thus it is enough to prove the statement for 
$$
\hat{p}(r)=\begin{pmatrix}
1&0\\0&\frac{1}{4}
\end{pmatrix}
r^2+
\begin{pmatrix}
0&\frac{a}{2}\\\frac{a}{2}&0
\end{pmatrix}r+
\begin{pmatrix}
b&0\\0&\frac{1}{4}
\end{pmatrix}
$$
and $r\in\R$.

Since $b\geq 0,$ we  have $\hat{p}(r)\geqslant 0$ if and only if $\op{Det}(\hat{p}(r))\geq 0$. A quick calculation shows that 
$$
\op{Det}\big(\hat{p}(r)\big)\geq 0\Leftrightarrow r^4+(b+1-a^2)r^2+b\geq 0.
$$
Substituting $r^2\mapsto s$ in the right hand side above, we obtain the polynomial $$q(s)=s^2+(b+1-a^2)s+b\in\R[s]$$ in the real variable $s$, of which we want to express positivity on $[0,\infty)$. So let $s_1,s_2$ denote the roots of $q$ and $\op{D}(q)$ the discriminant of $q$. Then it holds
\begin{align*}
\forall r\in\R:\ \op{Det}\big(\hat{p}(r)\big)\geq 0\Leftrightarrow\Big(\forall s\in\R:q(s)\geq 0 \lor \big(\op{D}(q)\geq 0\land\max\{s_1,s_2\}\leq 0\big)\Big).
\end{align*}
Clearly $q$ is nonnegative on the whole real line if and only if $\op{D}(q)\leq 0.$ Since  
\begin{align*}
\op{D}(q)=(b+1-a^2)^2-4b,
\end{align*}
 this is equivalent to $(a,b)\in S_1.$ For the second condition note that due to Vieta's formula for polynomials of degree two, it holds
$$
\max\{s_1,s_2\}\leq 0\Leftrightarrow (a^2-b-1=s_1+s_2\leq 0 \land b=s_1\cdot s_2\geq 0).
$$
Since these are the defining inequalities for  $S_2$, the proof is complete.
\end{proof}

In the following we use some 
basic definitions and results on semialgebraic and convex sets. The reader may consult the appendix (Section \ref{sec:app}) for  more detailed definitions and explanations.
\begin{proposition}\label{prop: nospec}
The convex cone $$\op{Decomp}_{2,2}=\op{Bpsd}_{2,2}\subseteq {\rm Her}_2(\C)\otimes {\rm Her}_2(\C)={\rm Her}_4(\C)$$ has a non-exposed face, and is not basic closed semialgebraic. In particular it is not a (classical) spectrahedron.
\end{proposition}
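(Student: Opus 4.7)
The plan is to exploit the two-dimensional affine slice of $\op{Bpsd}_{2,2}$ set up by Lemma~\ref{lem: biquad}. Writing $M(a,b) \in \her_2(\C) \otimes \her_2(\C)$ for the Hermitian matrix whose associated biquadratic form is the $p(x_1,x_2)$ of the lemma, block positivity of $M(a,b)$ is equivalent to $(a,b) \in S_1 \cup S_2$. As a first step I would replace the two-case description by a single convex set: minimising $Q(s) := s^2 + (b+1-a^2)s + b$ over $s \geq 0$ shows that $(a,b) \in S_1 \cup S_2$ if and only if $b \geq 0$ and $|a| \leq 1 + \sqrt{b}$, so the slice coincides with
\[ T := \{(a,b) \in \R^2 : b \geq 0,\ |a| \leq 1 + \sqrt{b}\},\]
whose lower boundary is the segment $[-1,1] \times \{0\}$ joined tangentially at $(\pm 1,0)$ to the two parabolic arcs $b = (|a|-1)^2$ for $|a| \geq 1$.

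For the non-exposed face I focus on $(1,0) \in T$. It is extreme in $T$: any convex decomposition forces $b_i = 0$ (since $b \geq 0$ on $T$) and then $a_i \in [-1,1]$ with convex combination $1$, so $a_i = 1$. Yet the only supporting line of $T$ at $(1,0)$ is $\{b = 0\}$, which also supports the entire segment $[-1,1] \times \{0\}$; hence the singleton $\{(1,0)\}$ is a non-exposed face of $T$. Since slices of spectrahedra are again spectrahedra and every face of a spectrahedron is exposed, this already obstructs $\op{Bpsd}_{2,2}$ from being a spectrahedron, and a direct transfer argument (tracing through the inclusion $T \cap H \subseteq F \cap L$ when $F$ is an exposed face of $\op{Bpsd}_{2,2}$ via $H$ and $L$ is the affine plane carrying the slice) promotes $\{(1,0)\}$ to a non-exposed face of $\op{Bpsd}_{2,2}$ itself.

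For the basic closed semialgebraic claim, since being basic closed semialgebraic is inherited by affine slices, it suffices to prove that $T$ is not basic closed semialgebraic. Suppose $T = \{p_1 \geq 0, \ldots, p_r \geq 0\}$ and localise at $(1,0)$. A real-analytic argument shows that some $p_i$ active at $(1,0)$ must contain the line $\{b = 0\}$ in its zero locus (so $b \mid p_i$), and some $p_j$ must contain the parabola $\{b = (a-1)^2\}$ in its zero locus (so $p_j = (b - (a-1)^2) r_j$). Testing the points $(1-\varepsilon, 0) \in T$ and $(1,1) \in T$ rules out $r_j(1,0) \neq 0$ of either sign, forcing $r_j(1,0) = 0$; pushing the Taylor expansion of $r_j$ one order further and evaluating $p_j$ along carefully chosen polynomial curves in $T$ and in its complement near $(1,0)$ yields the contradiction. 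I expect the main obstacle to be this final local analysis: the two branches of $\partial T$ meet tangentially at $(1,0)$ with matching tangent line but different curvatures, so the interplay between the various cofactors is delicate, and ruling out every possible configuration of the active polynomials by chasing orders of vanishing is the technical heart of the proof.
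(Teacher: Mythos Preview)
Your argument for the non-exposed face is essentially the paper's: intersect $\op{Bpsd}_{2,2}$ with the two-parameter affine plane coming from Lemma~\ref{lem: biquad}, observe that the extreme point $(1,0)$ of the slice is not exposed (the only supporting line being $\{b=0\}$), and lift this obstruction back to the ambient cone via Lemma~\ref{lem: hyperplanes}. Your rewriting of $S_1\cup S_2$ as $T=\{b\geq 0,\ |a|\leq 1+\sqrt b\}$ is correct and makes the picture transparent.

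For the failure of basic closedness you diverge from the paper. You propose a bare-hands local analysis at $(1,0)$: force one defining polynomial to be divisible by $b-(a-1)^2$, then chase orders of vanishing of the cofactor along curves in $T$ and its complement. This can be made to work, but as you yourself note it is delicate, and in the proposal it is only a sketch. The paper avoids all of this by invoking Sinn's criterion (Proposition~\ref{prop: sinn}): the irreducible polynomial $p_2=1-2a+a^2-b$ cuts out a component of the algebraic boundary $\partial_a T$, yet its zero set meets the \emph{interior} of $T$ at regular points (for instance $(0,1)$, or any point on the parabola with $|a|<1$). Since $T$ and its complement are regular, Proposition~\ref{prop: sinn} immediately gives that $T$ is not basic closed, and Lemma~\ref{lem:basic} transfers this to $\op{Decomp}_{2,2}$. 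So the paper's route is a one-line application of an existing black box, whereas yours amounts to reproving a special case of that black box by hand; both are valid, but the former is considerably shorter and leaves nothing to be filled in.
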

\begin{proof}
 For $a,b\in\R$ consider the matrix $M(a,b)$ defined as 
\begin{equation}\label{eq: matrixcounter}
\begin{pmatrix}
1 &0 & 0 & \frac{a}{4}\\
0& \frac{1}{4}&\frac{a}{4}&0\\
0& \frac{a}{4}&b&0\\
\frac{a}{4}&0 & 0&\frac{1}{4}
\end{pmatrix}=E_{11}\otimes\left(\begin{array}{cc}1 & 0 \\0 & \frac14\end{array}\right) +(E_{12}+E_{21})\otimes \left(\begin{array}{cc}0 & \frac{a}{4} \\\frac{a}{4} & 0\end{array}\right)+ E_{22}\otimes \left(\begin{array}{cc}b & 0 \\0 & \frac14\end{array}\right).
\end{equation}
Since $\op{Decomp}_{2,2}=\op{Bpsd}_{2,2}$ \cite{stormer,wor}, $M(a,b)$ is decomposable if and only if its corresponding biquadratic form with matrix coefficients
$$
p_{M(a,b)}(x_1,x_2)=\begin{pmatrix}
1&0\\0&\frac{1}{4}
\end{pmatrix}
| x_1|^2+
\begin{pmatrix}
0&\frac{a}{4}\\\frac{a}{4}&0
\end{pmatrix}(\overline{x}_1x_2+x_1\overline{x}_2)+
\begin{pmatrix}
b&0\\0&\frac{1}{4}
\end{pmatrix}
| x_2|^2
$$
in the complex variables $x_1,x_2$ is globally positive semidefinite. From Lemma \ref{lem: biquad} we get 
$$
\{(a,b)\in\R^2\,|\, M(a,b) \text{ is decomposable}\}= S_1\cup S_2
$$ 
where $S_1$ and $S_2$ are defined as in (\ref{lem: biquadsets}). One checks easily that $(-1,0)$ as well as $(1,0)$ are non-exposed faces of $S_1\cup S_2$, e.g.\ by computing the gradients of the boundary curves and showing that the only supporting hyperplane is defined by  $b=0$. 

Since $\{M(a,b)\,|\, a\in\R,b\in\R\}$ is obtained by intersecting $\op{Decomp}_{2,2}$ with  an affine subspace, Lemma \ref{lem: hyperplanes}  shows that also $\op{Decomp}_{2,2}$ has a non-exposed face. 

 Next we show that $\op{Decomp}_{2,2}$ is not basic closed. Since
$$
(b+1-a^2)^2-4b= (1+2a+a^2-b)(1-2a+a^2-b),
$$
the algebraic boundary $\partial_a(S_1\cup S_2)$  is given as the union of the zero sets of the polynomials $p_1(a,b)= 1+2a+a^2-b,p_2(a,b)=1-2a+a^2-b$ and $p_3(a,b)=b$, see also Figure \ref{fig: algbound}. Furthermore $S_1\cup S_2$ is a nonempty convex semialgebraic set with nonempty interior, and thus itself as well as its complement $\R^2\setminus(S_1\cup S_2)$ are regular. Therefore applying Proposition \ref{prop: sinn}  to the irreducible polynomial $p_1$ or $p_2$, and then  Lemma \ref{lem:basic}, shows that $\op{Decomp}_{2,2}$ is not basic closed semialgebraic.

Finally, both the non-exposed face as well as not being basic closed semialgebraic prevent $\op{Decomp}_{2,2}(=\op{Bpsd}_{2,2})$ from being a spectrahedron (\cite{Ramana1995},\cite{nepl}).
\end{proof}
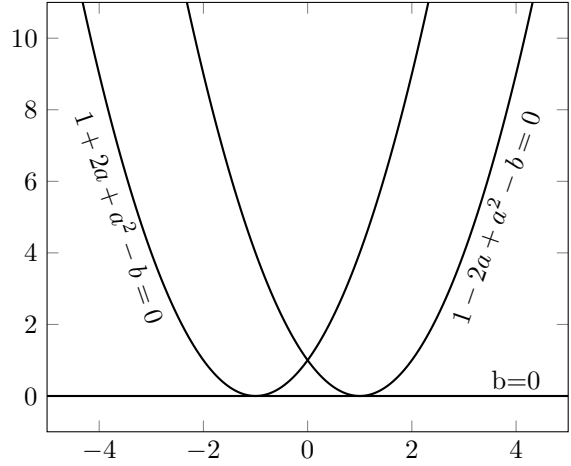
\begin{figure}
\begin{tikzpicture}
\begin{axis}[
      xmin=-5,xmax=5,
      ymin=-1,ymax=11,
      ]
	\plot[name path=A, thick,samples=200,domain=-5:5] {1-2*x+x^2};
	\plot[name path=B,thick,samples=200,domain=-5:5] {1+2*x+x^2};
	\plot[name path=X,thick,samples=200,domain=-5:5]{0};
	\node[] at (axis cs: 4,.45) {b=0};
	\node[rotate=-71] at (axis cs: -3.6,5) {$1+2a+a^2-b=0$};
		\node[rotate=71] at (axis cs: 3.6,5) {$1-2a+a^2-b=0$};
	\end{axis}
\end{tikzpicture}
\caption{The algebraic boundary of $S_1\cup S_2$ from  Lemma \ref{lem: biquad} and the proof of Proposition \ref{prop: nospec}}\label{fig: algbound}
\end{figure}
\begin{lemma}\label{lem: embed}
For $A,C\in{\rm Her}_2(\C)$ and $B\in{\rm Mat}_2(\C)$ let
$$
M=\begin{pmatrix}
A & B\\ B^* & C
\end{pmatrix}\in{\rm Her}_2(\C)\otimes{\rm Her}_2(\C)
$$
and define its lift $\hat{M}$ as
$$
\hat{M}:= E_{11}\otimes 
\begin{pmatrix}
A & \mathbf{0}\\
\mathbf{0} & \mathbf{0}
\end{pmatrix}+
E_{12}\otimes
\begin{pmatrix}
B & \mathbf{0}\\
\mathbf{0} & \mathbf{0}
\end{pmatrix}
+
E_{21}\otimes\begin{pmatrix}
B^* & \mathbf{0}\\
\mathbf{0} & \mathbf{0}
\end{pmatrix}
+E_{22}\otimes\begin{pmatrix}
C & \mathbf{0}\\
\mathbf{0} & \mathbf{0}
\end{pmatrix}\in{\rm Mat}_d(\C)\otimes{\rm Mat}_s(\C)
$$
where $E_{ij}\in{\rm Mat}_d(\C)$ denotes the usual matrix unit,  and $\mathbf{0}$ denotes zero matrices of suitable sizes. Then we have
\begin{align*}
M\in \op{Decomp}_{2,2}  & \Leftrightarrow \hat{M}\in \op{Decomp}_{d,s}  \Leftrightarrow \hat{M}\in \op{Bpsd}_{d,s}  \Leftrightarrow M\in \op{Bpsd}_{2,2}.
\end{align*}
\end{lemma}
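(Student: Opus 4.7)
The plan is to observe that $\hat{M}$ arises from $M$ by conjugation with an isometric embedding, and then close a short circular chain of implications ending with the Stormer--Woronowicz identity $\op{Bpsd}_{2,2}=\op{Decomp}_{2,2}$ already recorded in Section \ref{subsec: psdcones}. Let $P\in{\rm Mat}_{d,2}(\C)$ be the matrix whose two columns are the first two standard basis vectors of $\C^d$, and let $Q\in{\rm Mat}_{s,2}(\C)$ be defined analogously. Then $P^*P=I_2=Q^*Q$, so that $P\otimes Q\in{\rm Mat}_{ds,4}(\C)$ is an isometry. Evaluating the block definition of $\hat{M}$ on the four matrix units $E_{ij}\otimes X$ with $i,j\in\{1,2\}$ gives the identity
\begin{equation*}
\hat{M}=(P\otimes Q)\,M\,(P\otimes Q)^*.
\end{equation*}
Since $P$ and $Q$ have real $0/1$ entries, $P^*=P^T$ and $Q^*=Q^T$, hence $(P\otimes Q)(\cdot)(P\otimes Q)^*$ commutes with the partial transpose $\Gamma$ (as one checks on elementary tensors $A\otimes B$, using $(PAP^*)^T=P A^T P^*$).

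With this in hand, I will establish the chain
\begin{equation*}
M\in\op{Decomp}_{2,2}\ \Rightarrow\ \hat{M}\in\op{Decomp}_{d,s}\ \Rightarrow\ \hat{M}\in\op{Bpsd}_{d,s}\ \Rightarrow\ M\in\op{Bpsd}_{2,2}\ \Rightarrow\ M\in\op{Decomp}_{2,2}.
\end{equation*}
For the first step I write $M=X+Y^\Gamma$ with $X,Y\geqslant 0$; conjugation by $P\otimes Q$ preserves positivity and commutes with $\Gamma$, so $(P\otimes Q)X(P\otimes Q)^*\in\op{Psd}_{d,s}$ and $(P\otimes Q)Y^\Gamma(P\otimes Q)^*=\bigl((P\otimes Q)Y(P\otimes Q)^*\bigr)^\Gamma\in\op{Psd}_{d,s}^\Gamma$, summing to $\hat{M}$. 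The second inclusion is the general fact $\op{Decomp}\subseteq\op{Bpsd}$: $\op{Psd}\subseteq\op{Bpsd}$ is obvious, and $\op{Psd}^\Gamma\subseteq\op{Bpsd}$ follows from the elementary identity $(x\otimes y)^* Y^\Gamma(x\otimes y)=(\bar x\otimes y)^* Y(\bar x\otimes y)\geq 0$ for $Y\geqslant 0$. For the third step, given $x,y\in\C^2$, using $(P\otimes Q)^*(Px\otimes Qy)=(P^*P x)\otimes(Q^*Q y)=x\otimes y$, a short computation yields
\begin{equation*}
(Px\otimes Qy)^*\hat{M}(Px\otimes Qy)=(x\otimes y)^* M (x\otimes y),
\end{equation*}
and block positivity of $\hat{M}$ at the test vectors $Px,Qy$ thus transports to block positivity of $M$. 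The final implication is precisely the Stormer--Woronowicz theorem recalled in Section \ref{subsec: psdcones}.

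The only conceptually nontrivial point is the first implication: one must verify that the lift respects the two summands of a decomposition $M=X+Y^\Gamma$ separately, which is exactly the commutation of $\Gamma$ with conjugation by $P\otimes Q$, and this in turn uses that $P$ and $Q$ are real. Once that observation is in place, each of the remaining steps is a one-line verification, and no analysis of the algebraic structure of $\op{Decomp}_{d,s}$ for $(d,s)\neq(2,2)$ is needed — precisely because the full decomposability of the ambient cone is avoided by routing through block positivity on the outside and through Stormer--Woronowicz on the inside.
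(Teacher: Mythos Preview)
Your proof is correct and follows the same circular chain of four implications as the paper, ending with the St{\o}rmer--Woronowicz identity. The one difference is purely expository: you package the lift as conjugation by the explicit isometry $P\otimes Q$ and record that this conjugation commutes with $\Gamma$ (since $P,Q$ are real), whereas the paper simply says that lifting $X$ and $Y$ in the same way gives $\hat M=\hat X+\hat Y$ and leaves the preservation of $\op{Psd}$ and $\op{Psd}^\Gamma$ implicit; both arguments are the same in substance.
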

\begin{proof}
Assume $M\in \op{Decomp}_{2,2}$ and write $M=X+Y$ for $X\in\op{Psd}_{2,2}, Y\in \op{Psd}_{2,2}^\Gamma$. By taking the lifts $\hat{X},\hat{Y}$ in the same way as defined for $M$, we immediately obtain $\hat{M}=\hat{X}+\hat{Y},$ showing $\hat{M}\in \op{Decomp}_{d,s}.$

If $\hat M\in \op{Decomp}_{d,s}$ then clearly  $\hat M\in \op{Bpsd}_{d,s}$. Next, if  $\hat M\in \op{Bpsd}_{d,s}$, then $M\in \op{Bpsd}_{2,2}$. This is easily seen by lifting   elementary tensors $v\otimes w\in\C^2\otimes\C^2$ to elementary tensors $\hat v\otimes \hat w\in\C^d\otimes \C^s$ (by filling $v$ and $w$ up with zeros), and observing that $$(v\otimes w)^*M (v\otimes w)=(\hat v\otimes \hat w)^*\hat M (\hat v\otimes \hat w)$$ holds. Finally, block positivity implies decomposability in ${\rm Her}_2(\C)\otimes {\rm Her}_2(\C)$ \cite{stormer,wor}. 
\end{proof}

We can now finally give the proof of our main result.
\begin{proof}[Proof of Theorem \ref{thm: decompnospec}]
 Fix $d\geq 2$. By applying Lemma \ref{lem: embed} we embed $\op{Decomp}_{2,2}$ into $\op{Decomp}_{d,2}$.  Since $\op{Decomp}_{2,2}$ is not a spectrahedron by Proposition \ref{prop: nospec}, it is immediate that this is also true for $\op{Decomp}_{d,2}$. In particular, $\op{Decomp}_d$ is not a free spectrahedron, i.e.\ not finite-dimensional realizable.
  From Theorem \ref{thm: helton} and Proposition \ref{prop:dualsys} ($iii$) is follows that $\op{Dpsd}_d$ is not finitely generated.
\end{proof}

\begin{figure}
\begin{tikzpicture}
\begin{axis}[
      xmin=-5,xmax=5,
      ymin=-1,ymax=11,
      ]
	\plot[name path=A, thick,samples=200,domain=1:5] {1-2*x+x^2};
	\plot[name path=B,thick,samples=200,domain=-5:-1] {1+2*x+x^2};
		\plot[name path=X,draw=none,samples=200,domain=-5:5]{0};
		\plot[name path=Y,thick,samples=200,domain=-1:1]{0};

	\plot[name path=C,thick,samples=200,domain=-5:5]{11};
	\plot[name path=Psd,thick,samples=200,domain=-2:2,color=red]{x^2/4};
	
	\addplot[thick,red] coordinates {(-2,1)(-2,10.97)};
	\addplot[thick,red] coordinates {(2,1)(2,10.97)};	
	
	\addplot[fill=cyan,opacity=.4] fill between [of=C and X,soft clip={domain=-1:1}];
	\addplot[fill=cyan,opacity=.4] fill between [of=B and C,soft clip={domain=-5:-1}];
	\addplot[fill=cyan,opacity=.4] fill between [of=A and C,soft clip={domain=1:5}];
	\addplot[fill=red,opacity=.55] fill between [of=Psd and C,soft clip={domain=-2:2}];
	
\end{axis}
\end{tikzpicture}
\caption{The sets $\op{Sep}_{2,2}=\op{DPsd}_{2,2}=\op{Psd}_{2,2}=\op{Psd}^{\Gamma}_{2,2}$ (red) and $\op{Decomp}_{2,2}=\op{Bpsd}_{2,2}$ (blue) in our two-dimensional subspace.}\label{fig:pluspsd}
\end{figure}
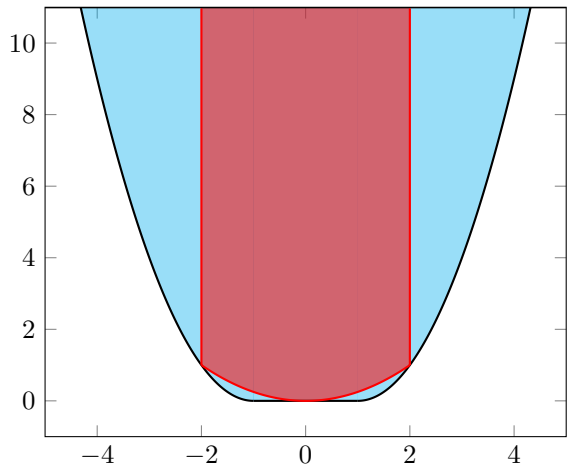

\begin{remark}
($i$) The above results indeed show that for all $d,s\geq 2$, the convex cones $\op{Decomp}_{d,s}$ and $\op{Bpsd}_{d,s}$ are not classical spectrahedra, since they have non-exposed faces and are not basic closed semialgebraic.  In particular, this reproves that $\op{Bpsd}_d$ is not finite-dimensional realizable and $\op{Sep}_d$ is not finitely generated. 

For $d+s>5,$ the cone $\op{Bpsd}_{d,s}$  is even known not to be the linear image of a spectrahedron (\cite{fawzi} combined with duality). However note that  $\op{Decomp}_{d,s}$ is the linear image of a spectrahedron, since it is the Minkowsi sum of two spectrahedra \cite{nepl}.

($ii$) Our result on $\op{Dpsd}_d$ can also be understood as follows. Whereare completely positive maps admit a finitary description by Choi's Theorem (as compressions of the identity), and the same is true for  completely copositive maps (as compressions of the transposition), no such finitary description is possible for doubly completely positive maps.

($iii$) In the abstract operator systems setup, our result shows that intersections of two finitely generated operator systems need not be finitely generated, and Minkowski sums of two operator systems with a finite-dimensional realization need not have a finite-dimensional realization.
\end{remark}

\section{Appendix}\label{sec:app}
Here we collect some definitions and results on semialgebraic and convex sets that we have used in the paper.
\begin{lemma}\label{lem: sumclosed}
Let $C,C_1,C_2\subseteq\R^d$ be closed, convex and salient cones, such that $C_1,C_2\subseteq C$. Then the Minkowski sum $C_1+C_2$ is closed.
\end{lemma}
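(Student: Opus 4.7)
The plan is to reduce the statement to a classical criterion for closedness of Minkowski sums of cones, namely: if $D_1, D_2$ are closed convex cones in $\R^d$ with $D_1 \cap (-D_2) = \{0\}$, then $D_1 + D_2$ is closed. Once this criterion is established, applying it to $D_1 = C_1$ and $D_2 = C_2$ is immediate, because
$$
C_1 \cap (-C_2) \subseteq C \cap (-C) = \{0\},
$$
where the last equality is precisely the salience of $C$. So the only real content to prove is the criterion.

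To prove the criterion, I would take a sequence $x_n = a_n + b_n$ with $a_n \in C_1, b_n \in C_2$, converging to some $x \in \R^d$, and show $x \in C_1 + C_2$. Split into two cases according to whether $(a_n)$ is bounded or not. If $(a_n)$ is bounded, a subsequence converges to some $a \in C_1$ (by closedness of $C_1$), and then $b_n = x_n - a_n$ converges to $x - a \in C_2$ (by closedness of $C_2$), giving $x = a + (x-a) \in C_1 + C_2$.

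If $(a_n)$ is unbounded, pass to a subsequence with $\|a_n\| \to \infty$. Then $a_n/\|a_n\|$ lies on the unit sphere; by compactness a further subsequence converges to some $\tilde a$ with $\|\tilde a\| = 1$. Since $C_1$ is a closed cone, $\tilde a \in C_1$. On the other hand,
$$
\frac{b_n}{\|a_n\|} \;=\; \frac{x_n}{\|a_n\|} - \frac{a_n}{\|a_n\|} \;\longrightarrow\; 0 - \tilde a \;=\; -\tilde a,
$$
and each $b_n/\|a_n\|$ belongs to $C_2$ (again because $C_2$ is a cone), so $-\tilde a \in C_2$ by closedness. Thus $\tilde a \in C_1 \cap (-C_2) = \{0\}$, contradicting $\|\tilde a\| = 1$. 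Hence this case cannot occur, and the first case already delivers the conclusion.

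The argument is short and there is no substantial obstacle; the only thing to notice is that salience of the ambient cone $C$ is exactly what is needed to rule out the unbounded case through the trivial intersection $C_1 \cap (-C_2) = \{0\}$. Without the common containment in a salient cone, the Minkowski sum of two closed convex cones can indeed fail to be closed, so the hypothesis $C_1, C_2 \subseteq C$ is used in an essential way at precisely this point.
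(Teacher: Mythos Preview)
Your proof is correct and complete. The approach, however, differs from the paper's. The paper argues via compact bases: since $C$ is closed and salient, it admits a compact convex base $B$ with $0\notin B$; then $B_i:=C_i\cap B$ are compact bases of $C_i$, their convex hull $\tilde B=\operatorname{conv}(B_1\cup B_2)\subseteq B$ is a compact base of $C_1+C_2$ with $0\notin\tilde B$, and any cone with such a base is closed (citing \cite{barvinok}). Your argument instead establishes and applies the standard criterion $C_1\cap(-C_2)=\{0\}$ via a direct sequential/compactness argument. Your route is more self-contained and makes the role of salience completely transparent (it is exactly what forces $C_1\cap(-C_2)\subseteq C\cap(-C)=\{0\}$), while the paper's route is a bit shorter on the page but leans on an external structural fact about salient cones.
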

\begin{proof}
Since $C$ is closed and salient we can find a compact convex  base $B$ of $C$ such that $0\notin B$. Clearly $B_1:=C_1\cap B$ and $B_2:=C_2\cap B$ are then compact bases of $C_1$ and $C_2$ respectively. Then $\tilde{B}:={\rm conv}(B_1\cup B_2)\subseteq B$  is a compact base of $C_1+C_2$ with $0\notin \tilde{B}$. Since every cone with compact such base is closed, see \cite{barvinok}, this proves the claim.
\end{proof}

 Recall that a {\it face} of a convex set $C\subseteq\mathbb{R}^d$ is a nonempty convex subset $F\subseteq C$, such that for $x,y\in C$ and $\lambda\in(0,1),\lambda x+(1-\lambda)y\in F$ implies $x,y\in F.$ A face $F$ of $S$ is \textit{exposed}, if $F=S$ or if there is an affine linear function $\ell$ on $\mathbb{R}^d$, with $\ell\geq 0$ on $C$ and
$$
F=\{a\in C\,|\, \ell(a)=0\}.
$$
In other words $F\neq C$ is exposed, if there exists a supporting hyperplane of $C$ that touches $C$ precisely in $F$. 

\begin{lemma}\label{lem: hyperplanes}
Let  $C\subseteq\R^d$ be a convex set, and $U\subseteq\R^d$ an affine subspace.
 If $C\cap U$ has a non-exposed face (in the space $U$), then so does $C$ in $\R^d$. \label{lem: convhyperplane}
\end{lemma}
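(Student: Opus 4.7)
The plan is to lift a non-exposed face $F$ of $C\cap U$ (in $U$) to a non-exposed face $F'$ of $C$ (in $\R^d$) by taking $F'$ to be the smallest face of $C$ containing $F$. The key technical claim will be that $F' \cap U = F$, and once this is established the rest is a direct contradiction argument: if $F'$ were exposed by some affine linear $\ell$ on $\R^d$ which is nonnegative on $C$ and vanishes precisely on $F'$, then $\ell|_U$ would be an affine linear function nonnegative on $C\cap U$ vanishing precisely on $F'\cap U = F$, exposing $F$ in $U$ and contradicting the hypothesis.

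To handle the degenerate case, I first observe that since $F$ is a non-exposed face, the definition given in the paper forces $F \neq C\cap U$ (otherwise $F = C\cap U$ would be exposed as the whole ambient convex set). So after proving $F'\cap U = F$, we automatically get $F' \neq C$, which is what lets us pass from the exposing function of $F'$ to a genuine exposing function of $F$ in the contradiction above.

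The main content is the identity $F'\cap U = F$. The inclusion $F \subseteq F'\cap U$ is immediate. For the reverse, I would use the standard characterization of the smallest face of a convex set containing a point: for $x\in\op{relint}(F)$, a point $y\in C$ lies in $F'$ if and only if there exists $\varepsilon>0$ with $(1+\varepsilon)x - \varepsilon y \in C$. Given $z\in F'\cap U$, pick such an $\varepsilon$; since both $x$ and $z$ lie in the affine subspace $U$, the point $w:=(1+\varepsilon)x-\varepsilon z$ also lies in $U$, hence in $C\cap U$. Now
\begin{equation*}
x \;=\; \tfrac{1}{1+\varepsilon}\,w \;+\; \tfrac{\varepsilon}{1+\varepsilon}\,z
\end{equation*}
exhibits $x\in F$ as a proper convex combination of two points in $C\cap U$. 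Since $F$ is a face of $C\cap U$, both $w$ and $z$ must lie in $F$; in particular $z\in F$.

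I expect this relative-interior/face characterization step to be the only nontrivial part of the argument, and it is really just an application of the definition of a face combined with the fact that affine subspaces are closed under forming the point $(1+\varepsilon)x - \varepsilon z$ from $x,z\in U$. Everything else is bookkeeping and the routine observation that restriction of a globally nonnegative affine linear function stays affine linear and nonnegative on the intersection with $U$.
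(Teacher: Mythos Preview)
Your proof is correct and follows essentially the same route as the paper: take the smallest face $F'$ of $C$ containing $F$, assume it is exposed, and restrict the exposing functional to $U$ to expose $F$. The only difference is that you actually \emph{prove} the key identity $F'\cap U=F$ via the relative-interior characterization of the smallest face, whereas the paper simply asserts this equality in the displayed chain $F=\hat F\cap U=(C\cap H)\cap U=(C\cap U)\cap(U\cap H)$; your version is therefore more complete.
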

\begin{proof} Let $F$ be a non-exposed face of $C\cap U$ in $U$.  Then there exists the unique smallest face  $\hat{F}\subsetneq C$ of $C$ containing $F$. Now assume $\hat{F}$ is exposed from $C$ by the supporting hyperplane $H$ in $\R^d$. Then $$F=\hat{F}\cap U=(C\cap H)\cap U=(C\cap U)\cap (U\cap H).$$ From $F\neq\emptyset$ we obtain $U\cap H\neq\emptyset,$ from $F\subsetneq C\cap U$ we obtain $U\not\subseteq H$.
Thus  $U\cap H$ is a hyperplane in $U$ that exposes $F$ from $C\cap U$, a contradiction.
 \end{proof}

\begin{proposition}[\cite{Ramana1995}]
Every face of a spectrahedron is exposed.
\end{proposition}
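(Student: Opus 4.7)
The plan is to construct, for each face $F$ of a spectrahedron $S = \{a\in\R^d \mid L(a)\geq 0\}$ with $L(a) := a_1 B_1 + \cdots + a_d B_d \in \her_r(\C)$, an explicit supporting linear functional that vanishes precisely on $F$. The case $F=S$ is tautological, so I assume $F\subsetneq S$ and pick any $a_0$ in the relative interior of $F$. The main geometric object driving the proof is the subspace $V := \ker L(a_0)\subseteq\C^r$, which is nonzero since otherwise $a_0$ would lie in the topological interior of $S$ and $F$ would equal $S$.

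The heart of the argument is the face--kernel dictionary
\[
F \;=\; \{\,a\in S \mid V\subseteq \ker L(a)\,\}.
\]
For the inclusion $\subseteq$, if $a\in F$, then by relative interiority of $a_0$ I can find $a'\in F$ and $t\in(0,1)$ with $a_0 = ta + (1-t)a'$; expanding $0 = v^* L(a_0) v = t\,v^* L(a) v + (1-t)\,v^* L(a') v$ for $v\in V$ and using $L(a),L(a')\geq 0$ forces $L(a) v = 0$. For $\supseteq$, given $a\in S$ with $V\subseteq\ker L(a)$, I would show that $a_\varepsilon := a_0 + \varepsilon(a_0 - a)$ still lies in $S$ for all sufficiently small $\varepsilon>0$. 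Splitting $\C^r = V\oplus V^\perp$, the operator $L(a_\varepsilon) = (1+\varepsilon)L(a_0) - \varepsilon L(a)$ is identically zero on $V$ (both summands are), while on $V^\perp$ the matrix $L(a_0)$ is positive definite with a strictly positive spectral gap, so small enough $\varepsilon$ preserves positive semidefiniteness on $V^\perp$. Hence the segment through $a$ and $a_0$ can be extended slightly past $a_0$ inside $S$, forcing $a$ to lie in the smallest face of $S$ containing $a_0$, which is $F$.

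Given the dictionary, exposing $F$ is a short trace computation. Let $P$ be the orthogonal projection onto $V$ and define
\[
\ell(a) \;:=\; \tr\bigl(P\,L(a)\bigr).
\]
For $a\in S$ one has $L(a)\geq 0$, so $\ell(a) = \tr\bigl(L(a)^{1/2} P\, L(a)^{1/2}\bigr)\geq 0$, with equality iff $P\,L(a)^{1/2} = 0$, iff $\im L(a)\subseteq V^\perp$, iff $V\subseteq\ker L(a)$. By the dictionary this is exactly the condition $a\in F$, so $\ell$ is an affine linear function on $\R^d$ with $\ell\geq 0$ on $S$ and $F = \{a\in S \mid \ell(a)=0\}$, exhibiting $F$ as exposed.

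The main obstacle is the direction $\supseteq$ of the face--kernel dictionary, where I have to verify robustness of positive definiteness on $V^\perp$ under the perturbation by $-\varepsilon L(a)$. Once the spectral-gap argument is carried out, the remaining steps---the easy inclusion $\subseteq$ and the trace calculation producing $\ell$---are routine and formal.
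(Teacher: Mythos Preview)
Your argument is correct and is essentially the standard proof of this fact; the face--kernel dictionary together with the linear functional $\ell(a)=\tr(P\,L(a))$ is exactly how the result is usually established. Note, however, that the paper does not give its own proof of this proposition at all: it simply quotes the statement with a citation to \cite{Ramana1995} and uses it as a black box in the proof of Proposition~\ref{prop: nospec}. So there is nothing in the paper to compare your proof against beyond observing that you have supplied a complete and clean proof where the authors chose to cite.
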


A {\it basic closed semialgebraic set} in  $\R^d$ is a set of the following form: 
$$\{ a\in \R^d\mid p_1(a)\geq 0,\ldots, p_r(a)\geq 0\}$$ where $p_1,\ldots, p_r\in\R[x_1,\ldots, x_d]$ are polynomials. A general {\it semialgebraic set} is a finite Boolean combination of basic closed sets.  

\begin{lemma}\label{lem:basic}
Let  $S\subseteq\R^d$ be a basic closed semialgebraic set, and $U\subseteq\R^d$ an affine subspace.
Then $S\cap U$ is a basic closed semialgebraic set in $U$.
\end{lemma}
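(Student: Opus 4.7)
The plan is to reduce the statement to a direct substitution argument using an affine parametrization of the subspace $U$. Write $S=\{a\in\R^d\mid p_1(a)\geq 0,\ldots, p_r(a)\geq 0\}$ for some polynomials $p_1,\ldots,p_r\in\R[x_1,\ldots,x_d]$, which is possible by the hypothesis that $S$ is basic closed semialgebraic. Let $k$ denote the dimension of the affine subspace $U$.

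First I would choose an affine isomorphism $\varphi\colon\R^k\to U$, so $\varphi(y)=v_0+Ly$ for some $v_0\in U$ and an injective linear map $L\colon\R^k\to\R^d$ whose image is the direction of $U$. This identifies $U$ with $\R^k$ in the category of affine spaces, which is the natural setting in which to speak of a set being basic closed semialgebraic ``in $U$''. Under this identification, $S\cap U$ corresponds to the preimage $\varphi^{-1}(S\cap U)\subseteq \R^k$.

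Next I would note that
\[
\varphi^{-1}(S\cap U)=\left\{y\in\R^k\bigm| p_1(\varphi(y))\geq 0,\ldots, p_r(\varphi(y))\geq 0\right\}.
\]
Each function $q_i(y):=p_i(\varphi(y))=p_i(v_0+Ly)$ is the composition of a polynomial with an affine linear map, hence itself a polynomial in $y_1,\ldots,y_k$. Therefore $\varphi^{-1}(S\cap U)$ is, by definition, a basic closed semialgebraic subset of $\R^k$. Transporting this back along $\varphi$ shows that $S\cap U$ is basic closed semialgebraic in $U$.

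There is essentially no obstacle here; the lemma is a formal consequence of the fact that the pullback of a polynomial under an affine map is a polynomial. The only point that requires a word of care is what ``basic closed semialgebraic in $U$'' is meant to mean: one needs a convention identifying the affine space $U$ with $\R^{\dim U}$ up to affine isomorphism, and the argument shows the property is invariant under this choice, since any two such parametrizations differ by an affine isomorphism of $\R^k$, which again pulls polynomials back to polynomials.
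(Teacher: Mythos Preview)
Your proof is correct and follows exactly the paper's approach: the paper's one-line proof simply says ``restrict the defining polynomials of $S$ to $U$'', and your affine parametrization $\varphi$ is precisely the way to make this restriction precise. Your additional remark about independence of the chosen parametrization is a nice clarification but not required.
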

\begin{proof}
This is obvious by restricting the defining polynomial of $S$ to $U$.
\end{proof}

{\it Quantifier elimination} in real closed fields \cite{prestel} implies that all level sets of the operator systems that we have considered in this paper are semialgebraic. However, as we have shown, they are not necessarily basic closed. 
To formulate a necessary condition we need the following definitions.
\begin{definition} Let $S\subseteq\mathbb{R}^d$ be a semialgebraic set. 
\begin{enumerate}[leftmargin=*]
\item[($i$)] The \textit{algebraic boundary} $\partial_a S$ of $S$ is the Zariski closure in $\mathbb{A}^n$ of its boundary $\partial S$ in the Euclidean topology.
\item[($ii$)] $S$ is called \textit{regular}, if it is contained in the closure of its interior (w.r.t.\ the Euclidean topology).
\end{enumerate}
\end{definition}

Note that every convex set with nonempty interior is regular, and so is its complement.

\begin{proposition}[\cite{sinn}]\label{prop: sinn}
Let $S\subseteq\mathbb{R}^d$ be a nonempty regular semialgebraic set, and suppose that its complement $\mathbb{R}^d\setminus S$ is also regular and nonempty. If the interior of $S$ intersects the algebraic boundary of $S$ in a regular point, then $S$ is not basic closed semialgebraic.
\end{proposition}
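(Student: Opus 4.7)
I plan to prove this by contradiction, following the argument of \cite{sinn}. Assume $S = \{a \in \R^d \mid p_1(a) \geq 0, \ldots, p_r(a) \geq 0\}$ is basic closed semialgebraic, and fix a regular point $p$ of $\partial_a S$ with $p \in \op{int}(S)$. Because $p$ is a smooth point of the variety $\partial_a S$, exactly one irreducible component $Z$ of $\partial_a S$ passes through $p$; locally at $p$, this component is a smooth real hypersurface with irreducible defining polynomial $q$ satisfying $\nabla q(p) \neq 0$. In particular, $q$ attains both strictly positive and strictly negative values in every punctured Euclidean neighborhood of $p$.

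From $\partial S \subseteq \bigcup_i V(p_i)$ we get $\partial_a S \subseteq \bigcup_i V(p_i)$ after Zariski closure, and hence by irreducibility of $Z$ there is some index $j$ with $Z \subseteq V(p_j)$; write $p_j = q^{k} \cdot g$ with $k \geq 1$ maximal, so that $g$ does not vanish identically on $Z$.

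The crux is to arrange, possibly after replacing $j$, that $k$ is odd. This is where both regularity hypotheses enter. Since $S$ and $\R^d \setminus S$ are both regular and nonempty, the Euclidean boundary $\partial S$ is a $(d-1)$-dimensional semialgebraic set whose Zariski closure is $\partial_a S$; in particular $\partial S \cap Z$ is Zariski-dense in $Z$. At a generic $z \in Z \cap \partial S$, the smooth hypersurface $Z$ locally separates $\op{int}(S)$ from $\op{int}(\R^d \setminus S)$. On the $S$-side every $p_i \geq 0$, while on the complement side at least one $p_i$ must take strictly negative values; by continuity that $p_i$ vanishes on $Z$ near $z$, so $Z \subseteq V(p_i)$, and its multiplicity along $Z$ is forced to be odd (the factor $q^{k_i}$ changes sign across $Z$). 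Relabel this index as $j$.

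To finish, choose an open $U \ni p$ with $U \subseteq S$, so $p_j = q^k g \geq 0$ on $U$. Pick $p' \in Z \cap U$ with $g(p') \neq 0$ and $q$ smooth at $p'$; each condition excludes only a proper Zariski-closed subset of $Z$, so such $p'$ exists. Near $p'$, $q$ takes both strict signs, so with $k$ odd the factor $q^k$ changes sign across $Z$, while $g$ has constant nonzero sign on a small neighborhood of $p'$. Consequently $p_j = q^k g$ takes strictly negative values in every neighborhood of $p'$, contradicting $p_j \geq 0$ on $U$. The principal obstacle is the middle step: extracting odd parity of $k$ from the two regularity assumptions requires the geometric input that $Z$ generically lies in the Euclidean boundary and separates the two interiors there. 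Once that parity is secured, the final contradiction reduces to the elementary observation that $q^{\text{odd}} \cdot g$ must change sign across any smooth zero of $q$ where $g$ is nonvanishing.
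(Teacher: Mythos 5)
The paper offers no proof of this proposition — it is quoted verbatim from \cite{sinn} — so there is nothing internal to compare against; judged on its own, your argument is the standard sign-change argument behind Sinn's result and its overall structure is sound: $\partial S\subseteq\bigcup_i V(p_i)$, the component $Z$ through the regular point $p\in\op{int}(S)$ is an irreducible hypersurface with principal real vanishing ideal $(q)$ (here you correctly use regularity of $S$ and of its complement, which forces $\partial S$ to have local dimension $d-1$ everywhere, hence every component of $\partial_a S$, in particular $Z$, to be of dimension $d-1$; this is also what makes ``$q\nmid g$ implies $g$ does not vanish identically on $Z$'' legitimate), odd multiplicity of $q$ in some $p_j$ via the local separation of $\op{int}(S)$ from $\op{int}(\R^d\setminus S)$ along $Z$, and the final contradiction at a generic point of $Z$ inside $\op{int}(S)$.

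The one step you must tighten is the inference ``by continuity that $p_i$ vanishes on $Z$ near $z$, so $Z\subseteq V(p_i)$.'' Continuity only yields $p_i(z)=0$ at the single point $z$; the index of the violated inequality may a priori change both with the point chosen on the complement side and with $z$ itself, so no single $p_i$ is yet known to vanish on a relatively open piece of $Z$. The repair is a pigeonhole over boundary points rather than over one point: for every $z$ in a $(d-1)$-dimensional subset of $Z\cap\partial S$ (generic: $z$ smooth on $Z$, off the other components) there is an index $i(z)$ with $p_{i(z)}(z)=0$ and $p_{i(z)}$ strictly negative arbitrarily close to $z$; since there are only finitely many indices, one fixed $i$ works on a subset of dimension $d-1$, whose Zariski closure must be all of the irreducible $(d-1)$-dimensional $Z$, giving $Z\subseteq V(p_i)$; the odd parity of the exponent of $q$ in $p_i$ is then read off at one such point where additionally $g_i\neq 0$ and $\nabla q\neq 0$ (both exclude only proper subvarieties of $Z$). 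With this fix — and the (easy, but worth stating) observation that $Z\cap U$ is Zariski dense in $Z$ because $p$ is a smooth point of the hypersurface $Z$, so your final choice of $p'$ is legitimate — the proof is complete.
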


\section{Acknowledgements}
The first author gratefully acknowledges funding by the Austrian Academy of Sciences
(\"OAW), through a DOC-Scholarship.
\bibliography{ref.bib}
\end{document}